\newtheorem{theorem}{Theorem}
\newtheorem{proposition}{Proposition}
\theoremstyle{definition}
\theoremstyle{remark}
\DeclareMathOperator{\diam}{diam}
\begin{document}

\title[]{The Boundary of a Graph and \\its Isoperimetric Inequality}
\subjclass[2010]{05C12, 05C69,  28E99.} 
\keywords{Graph, Boundary, Isoperimetric Inequality.}
\thanks{S.S. is supported by the NSF (DMS-2123224) and the Alfred P. Sloan Foundation.}

\author[]{Stefan Steinerberger}
\address{Department of Mathematics, University of Washington, Seattle, WA 98195, USA}
\email{steinerb@uw.edu}

\begin{abstract} We define, for any graph $G=(V,E)$, a boundary $\partial G \subseteq V$. The definition coincides
with what one would expected for the discretization of (sufficiently nice) Euclidean domains and contains all vertices
from the Chartrand, Erwin, Johns \& Zhang boundary. Moreover, it satisfies an isoperimetric principle stating that
graphs with many vertices have a large boundary unless they contain long paths: we show that for graphs with maximal degree $\Delta$
 $$ | \partial G| \geq \frac{1}{2\Delta}\hspace{1pt} \frac{|V|}{\diam(G)}.$$
For graphs discretizing Euclidean domains, one has $\diam(G) \sim |V|^{1/d}$ and recovers the scaling of the classical Euclidean
 isoperimetric principle. 
\end{abstract}

\maketitle

\vspace{0pt}
\section{Introduction}
We define a notion of boundary $\partial G \subseteq V$ for a graph $G=(V,E)$. We first discuss what one would expect from such a notion: when thinking
of `generic' subsets $\Omega \subset \mathbb{R}^d$ one realizes that there are strong conditions on $\Omega$ that are needed to meaningfully
define a boundary (see geometric measure theory, e.g. Federer \cite{federer}). Since graphs can `exhibit' hyperbolic structure (for example Erd\H{o}s-Renyi random graphs or expanders), most vertices should be boundary vertices unless the graph exhibits special structure. Another fundamental fact regarding the notion of boundary is that a `large' set should have a `large' boundary. For domains $\Omega \subset \mathbb{R}^d$ this can be formulated in terms of the isoperimetric inequality: it states that $\mbox{area}(\partial \Omega) \geq c \cdot \mbox{vol}(\Omega)^{(d-1)/d}$ with equality attained only when $\Omega$ is a ball. A similar principle should be true for $\partial G$: graphs with many vertices should have a large boundary -- however, here things already get more complicated because a path graph clearly has two boundary vertices independently of length.

\begin{center}
\begin{figure}[h!]
\begin{tikzpicture}[scale=0.8]
\node at (0,0) {\includegraphics[width=0.18\textwidth]{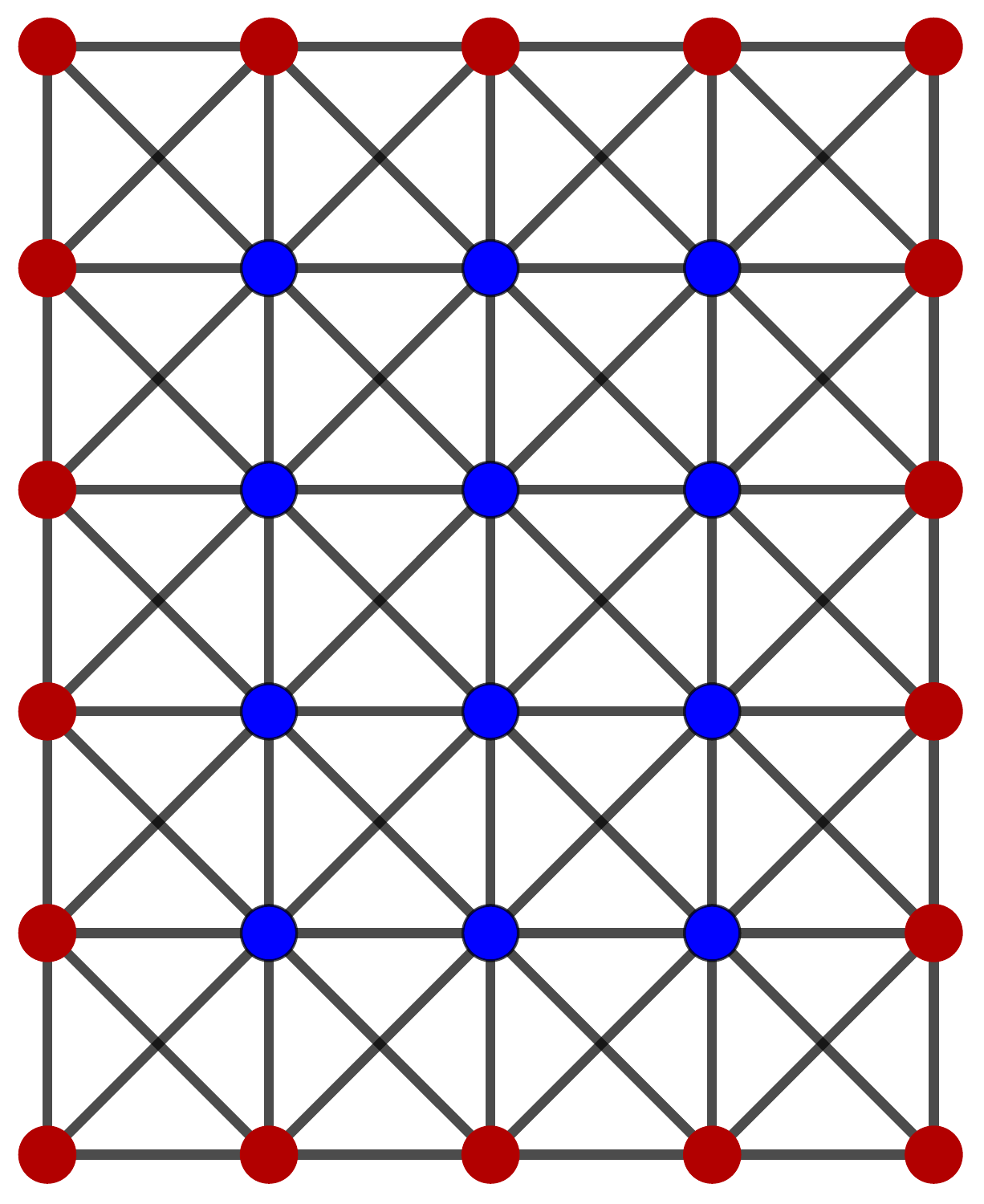}};
\node at (4.2,0) {\includegraphics[width=0.21\textwidth]{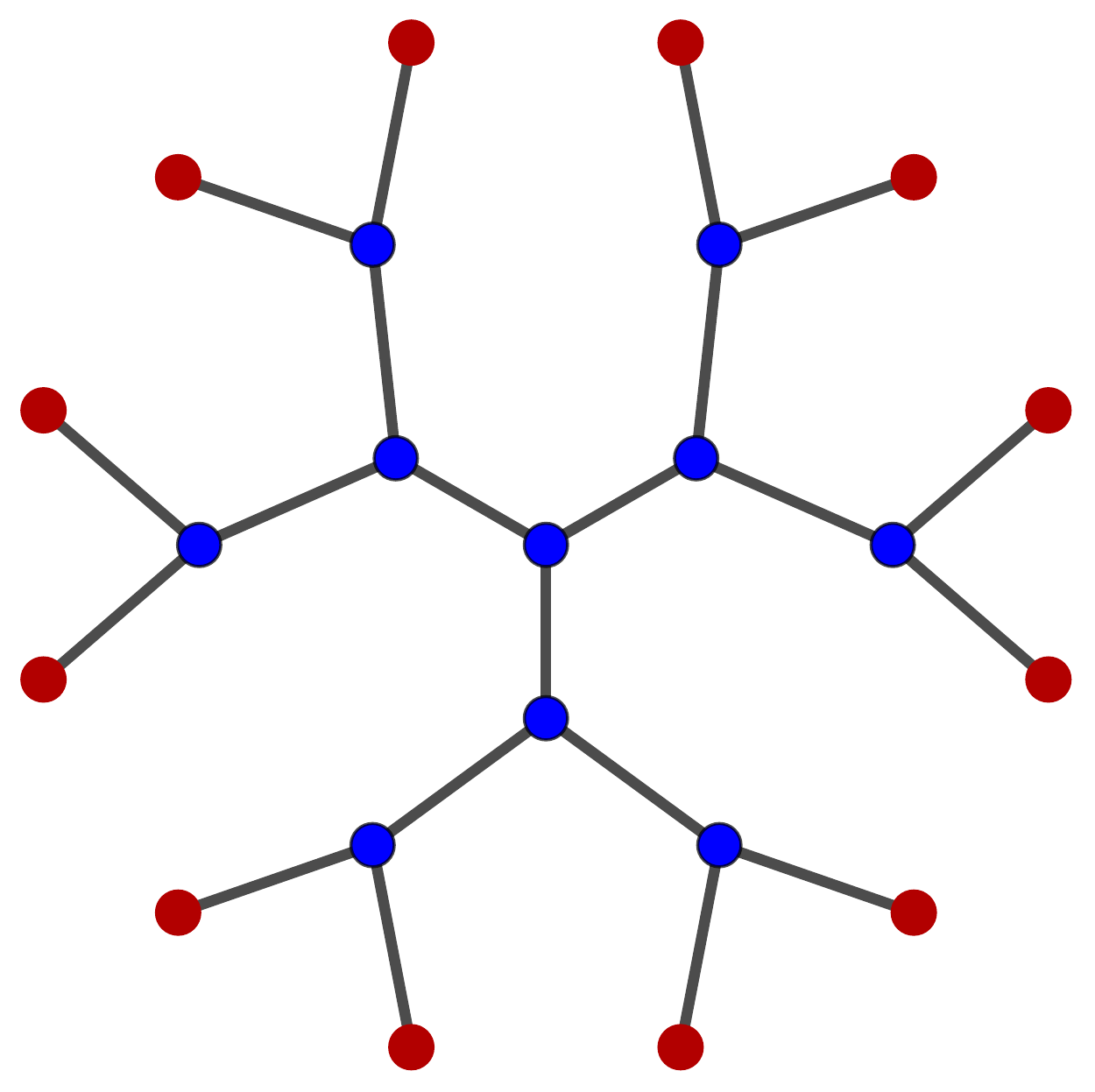}};
\node at (9.5,0) {\includegraphics[width=0.34\textwidth]{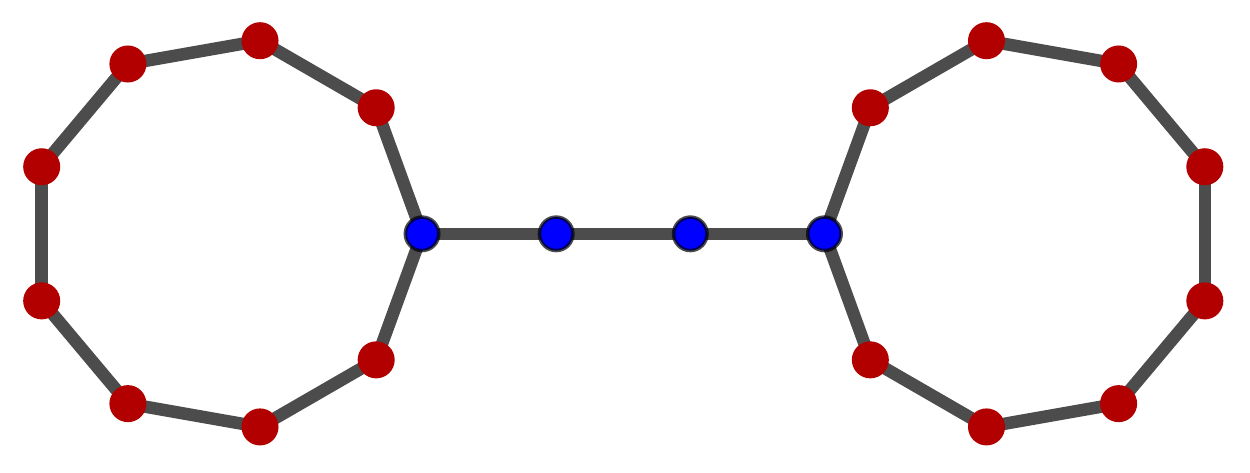}};
\end{tikzpicture}
\vspace{-10pt}
\caption{Graphs, their boundary $\partial G$ (red) and interior $V \setminus \partial G$ (blue).}
\end{figure}
\end{center}

\textbf{The Chartrand, Erwin, Johns \& Zhang boundary.} A definition of a boundary, which we will denote by $(\partial G)^* \subseteq V$ throughout the paper,  was given by Chartrand, Erwin, Johns \& Zhang \cite{ch}. The idea is very natural: we say that a vertex $u \in V$ is a boundary vertex, $u \in (\partial G)^*$, if there exists another vertex $w \in V$ such that the neighbors of $u$ are all not further away from $w$ than $u$ (see Fig. 2).
\begin{center}
\begin{figure}[h!]
\begin{tikzpicture}
\filldraw (0,0) circle (0.06cm);
\filldraw (1,0) circle (0.06cm);
\filldraw (2,0) circle (0.06cm);
\draw [thick] (0,0) -- (1,0);
\draw[thick]  (1,0) -- (2,0); 
\node at (0,-0.3) {$w$};
\node at (2,-0.3) {$u$};
\filldraw (1,1) circle (0.06cm);
 \filldraw (1,-1) circle (0.06cm);
 \draw [thick] (1,-1) -- (1,1) -- (2,0) -- (1,-1);
\end{tikzpicture}
\caption{$u$ is a boundary vertex because $d(w,u) = 2$ and every neighbor of $u$ is at most distance 2 from $w$.}
\end{figure}
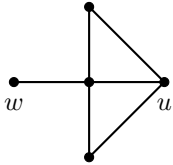
\end{center}
\vspace{-15pt}
Formally, we can define the Chartrand-Erwin-Johns-Zhang boundary as
$$(\partial G)^* = \left\{u \in V \big|  ~\exists v \in V ~ \forall (u,w) \in E: \quad d(w,v) \leq d(u,v)  \right\}.$$
Examples of the definition are shown in Fig. 1 (these examples are such that $(\partial G)^*$ coincides with our definition of boundary $\partial G$). $(\partial G)^*$ has been studied in a variety of papers, we refer to Caceres-Hernando-Mora-Pelayo-Puertas-Seara \cite{ce},  Chartrand-Erwin-Johns-Zhang \cite{ch2}, Hasegawa-Saito \cite{has},  Hernando-Mora-Pelayo-Seara \cite{her} and  M\"uller-P\'or-Sereni \cite{mu, mu2}. We also refer to \cite{all, art, bela, eroh, harper, mezz, pel} for related results.

\begin{center}
\begin{figure}[h!]
\begin{tikzpicture}
\node at (0,0) {\includegraphics[width=0.35\textwidth]{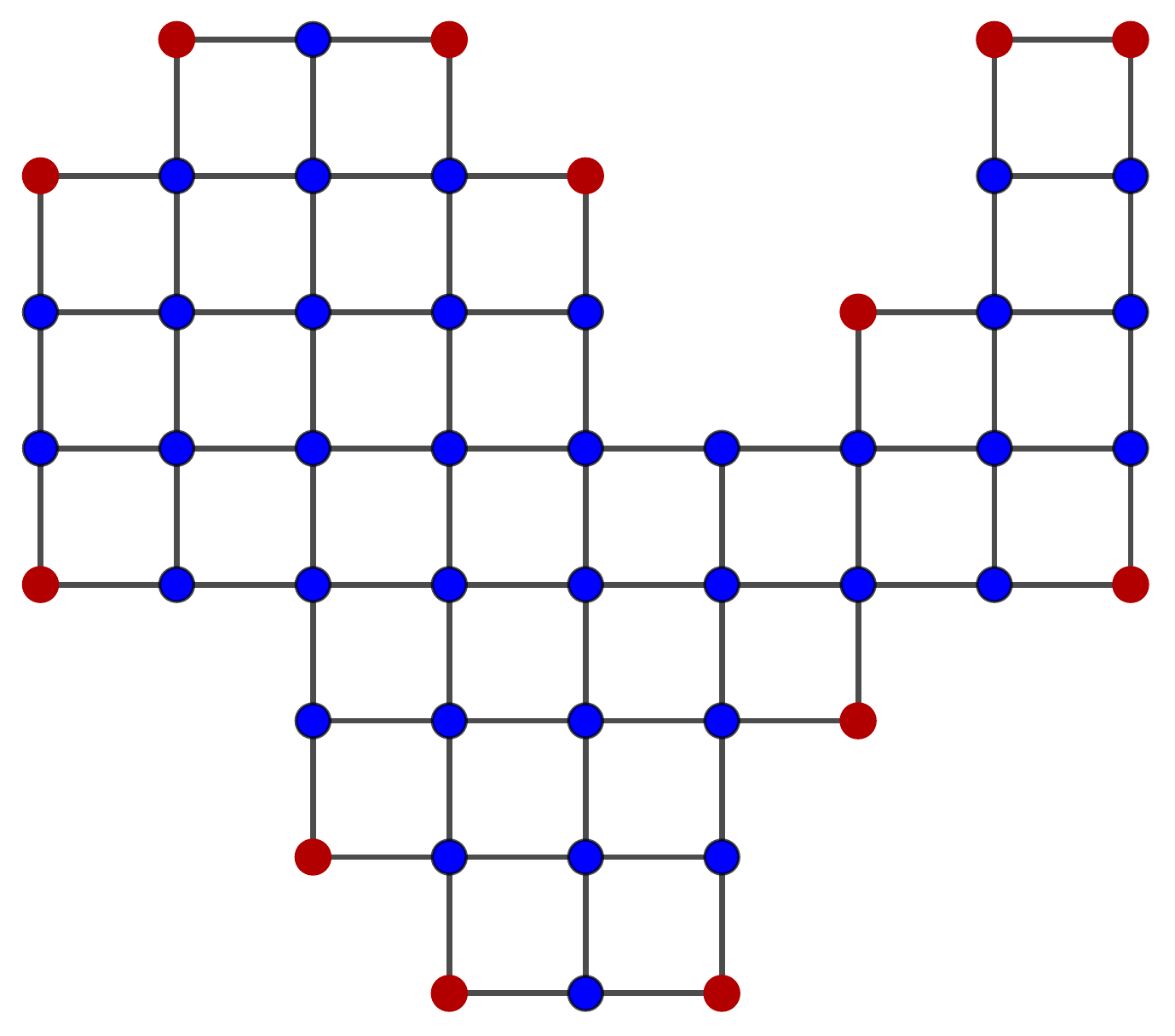}};
\node at (2,-1) {$(\partial G)^*$};
\node at (6.5,0) {\includegraphics[width=0.35\textwidth]{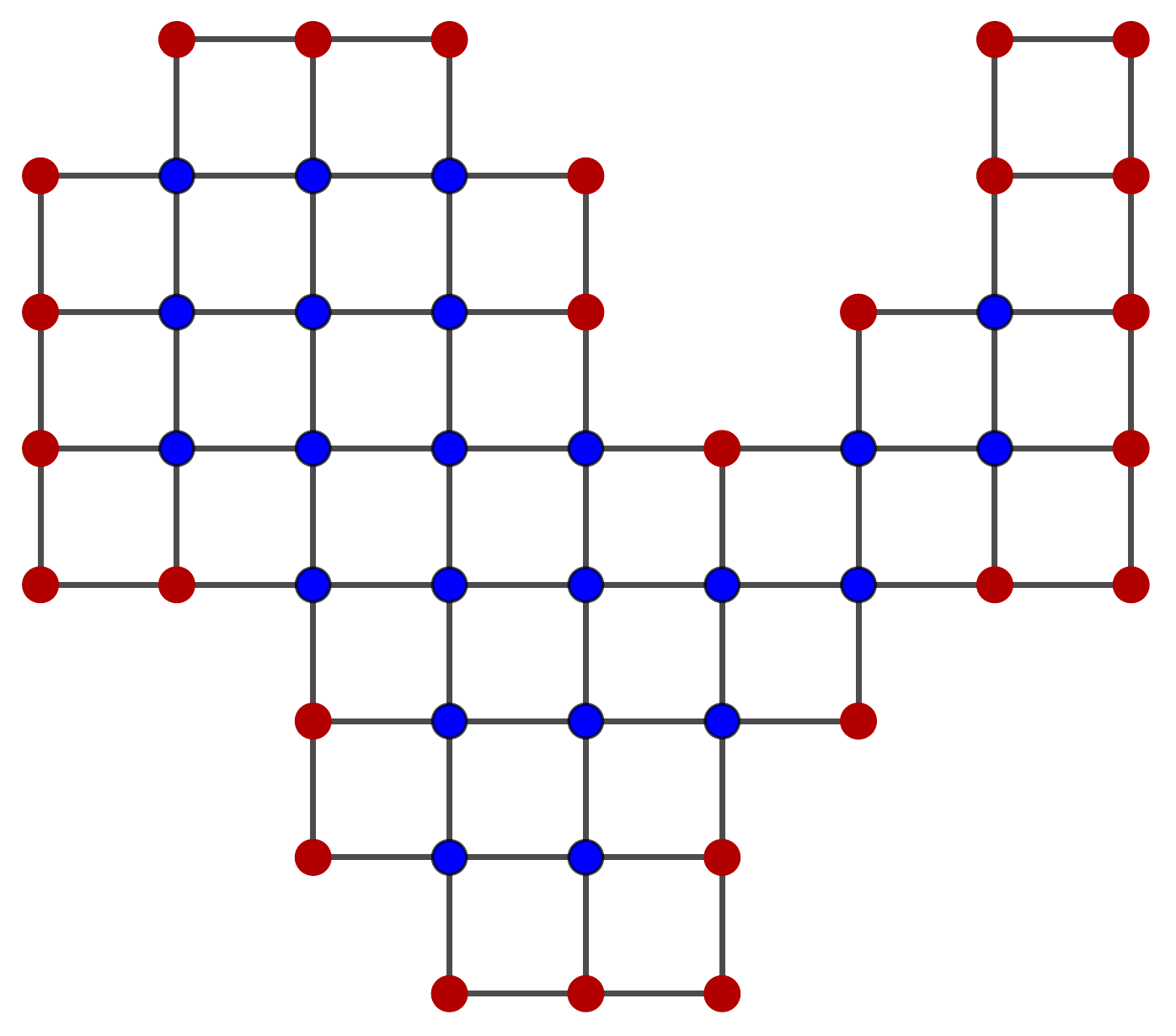}};
\node at (8.5,-1) {$\partial G$};
\end{tikzpicture}
\caption{Left: Chartrand-Erwin-Johns-Zhang boundary $(\partial G)^*$ (in red). Right: our notion of boundary $\partial G$ (in red).}
\end{figure}
\end{center}
\vspace{-15pt}
It is clear that vertices in the Chartrand-Erwin-Johns-Zhang boundary $(\partial G)^*$ are natural candidates to
be boundary vertices. However, when looking at examples, one might perhaps wonder if one should perhaps add some
of the other vertices as well. For this, we first consider the definition in Euclidean space. If $\Omega \subset \mathbb{R}^d$ is
a bounded domain with sufficiently nice boundary, then the Chartrand-Erwin-Johns-Zhang boundary will be a subset of $\partial \Omega$, the boundary of $\Omega$, but
it may well be a strict subset. The reason (see Fig. 4) is that in the presence of non-positively curved boundary, the boundary is actually being
classified as an interior point. Even in the presence of positively-curved boundary, the domain has to contain a point at a certain distance for a point
on the boundary to be classified as part of the Chartrand-Erwin-Johns-Zhang boundary.

\begin{center}
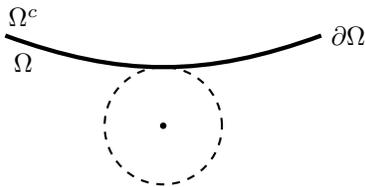
\begin{figure}[h!]
\begin{tikzpicture}[scale=1.2]
\draw [ultra thick] (0,0) to[out=340, in=200] (3.5,0);
\node at (0.2,0.2) {$\Omega^c$};
\node at (0.2, -0.3) {$\Omega$};
\filldraw (1.75, -1) circle (0.03cm);
\draw [thick, dashed] (1.75, -1) circle (0.65cm);
\node at (3.8, 0) {$\partial \Omega$};
\end{tikzpicture}
\caption{$(\partial G)^*$ in $\mathbb{R}^2$: in the case of non-positively curved boundary, the boundary does not satisfy the definition.}
\end{figure}
\end{center}
\vspace{-15pt}

Another motivating factor in our search for an alternative definition is that $(\partial G)^*$ does not satisfy an isoperimetric principle (see Fig. 5). An example is given by $n \times n$ grid graphs which nicely emulate the Euclidean domain $[0,1]^2 \subset \mathbb{R}^2$: however, independently of the number of vertices, $(\partial G)^*$ consists of exactly 4 corner vertices.
\vspace{0pt}
\begin{center}
\begin{figure}[h!]
\begin{tikzpicture}
\node at (0,0) {\includegraphics[width=0.14\textwidth]{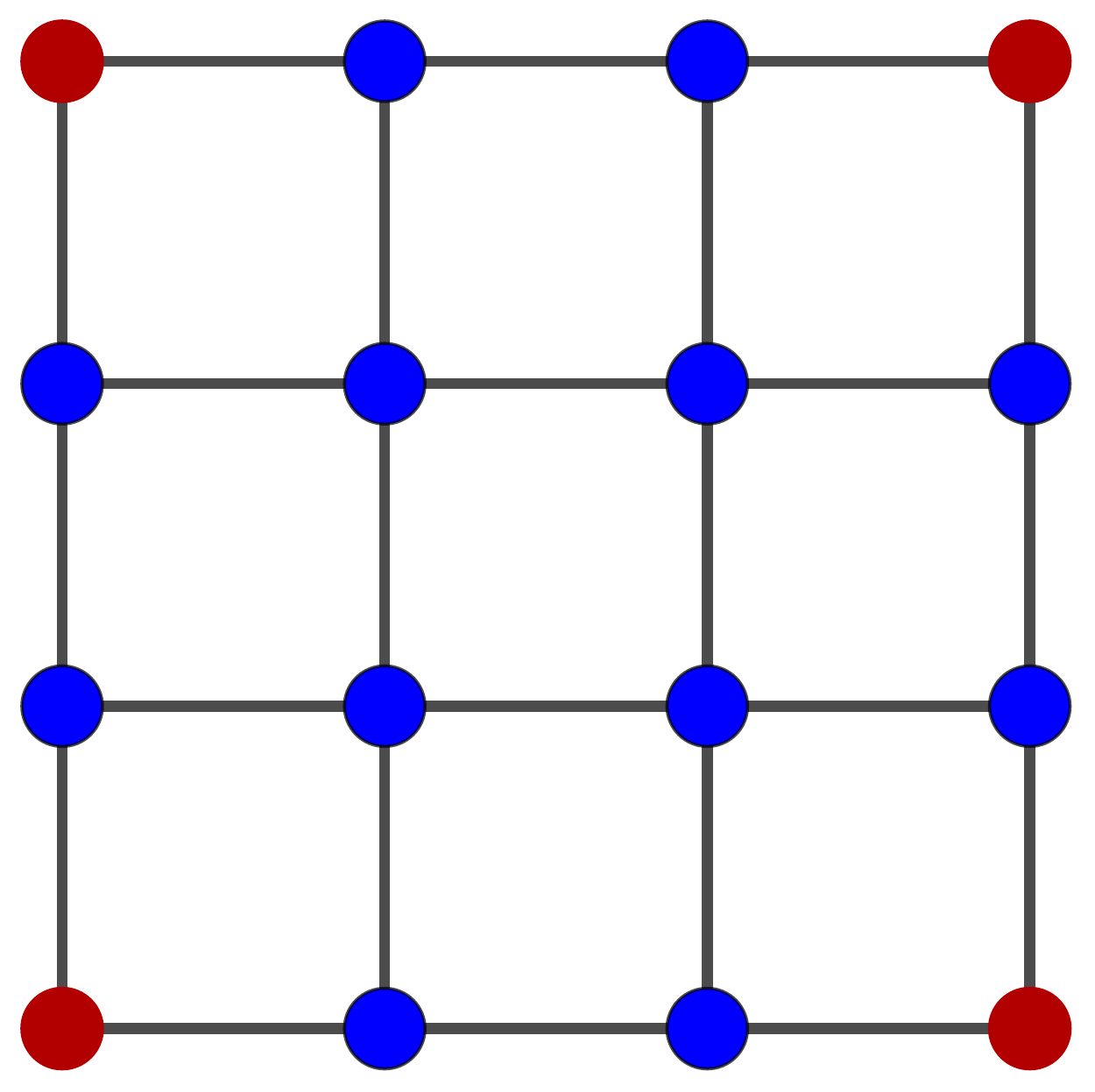}};
\node at (2.5,0) {\includegraphics[width=0.18\textwidth]{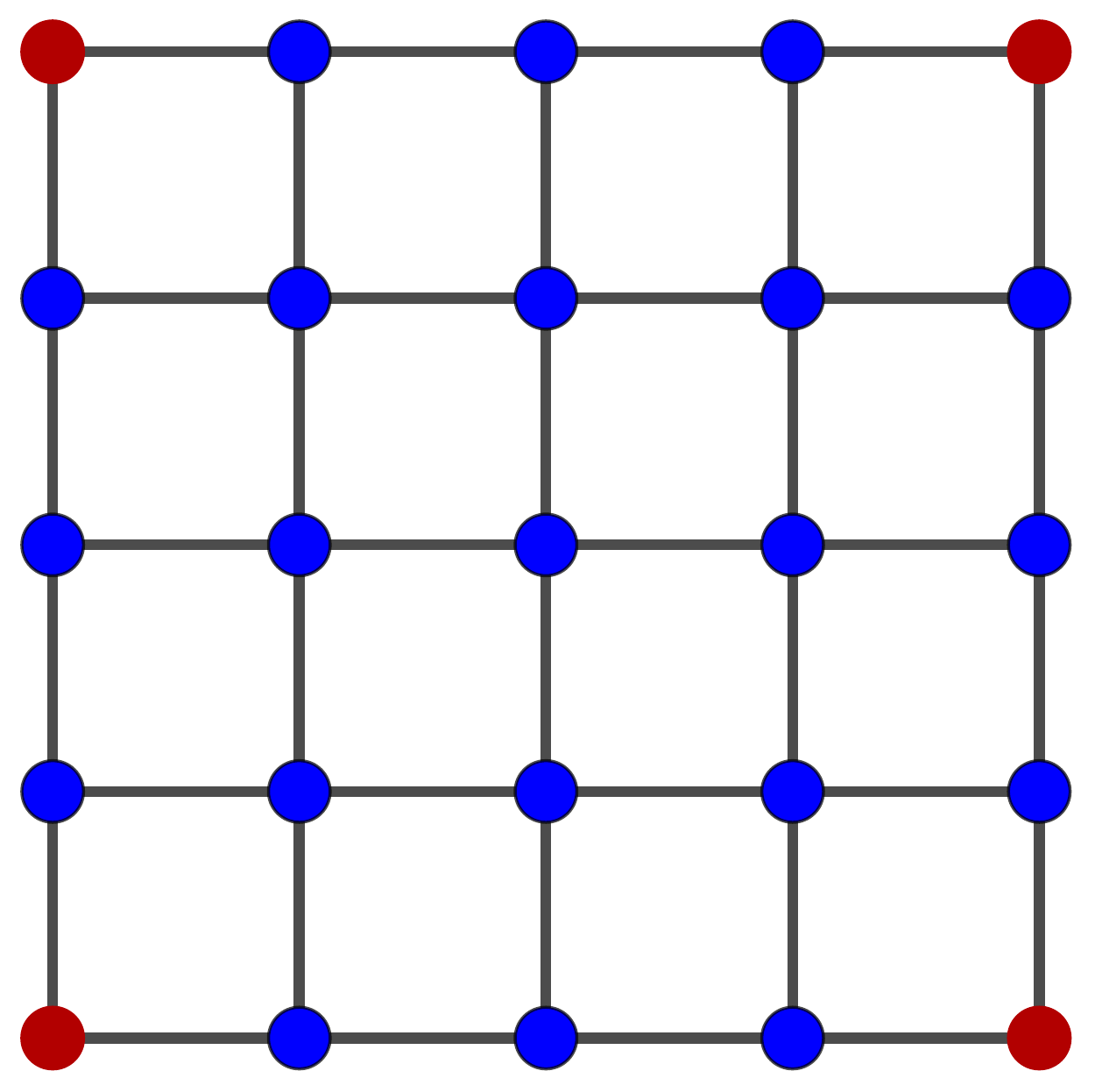}};
\node at (1.25,-1.4) {$(\partial G)^*$};
\node at (6,0) {\includegraphics[width=0.14\textwidth]{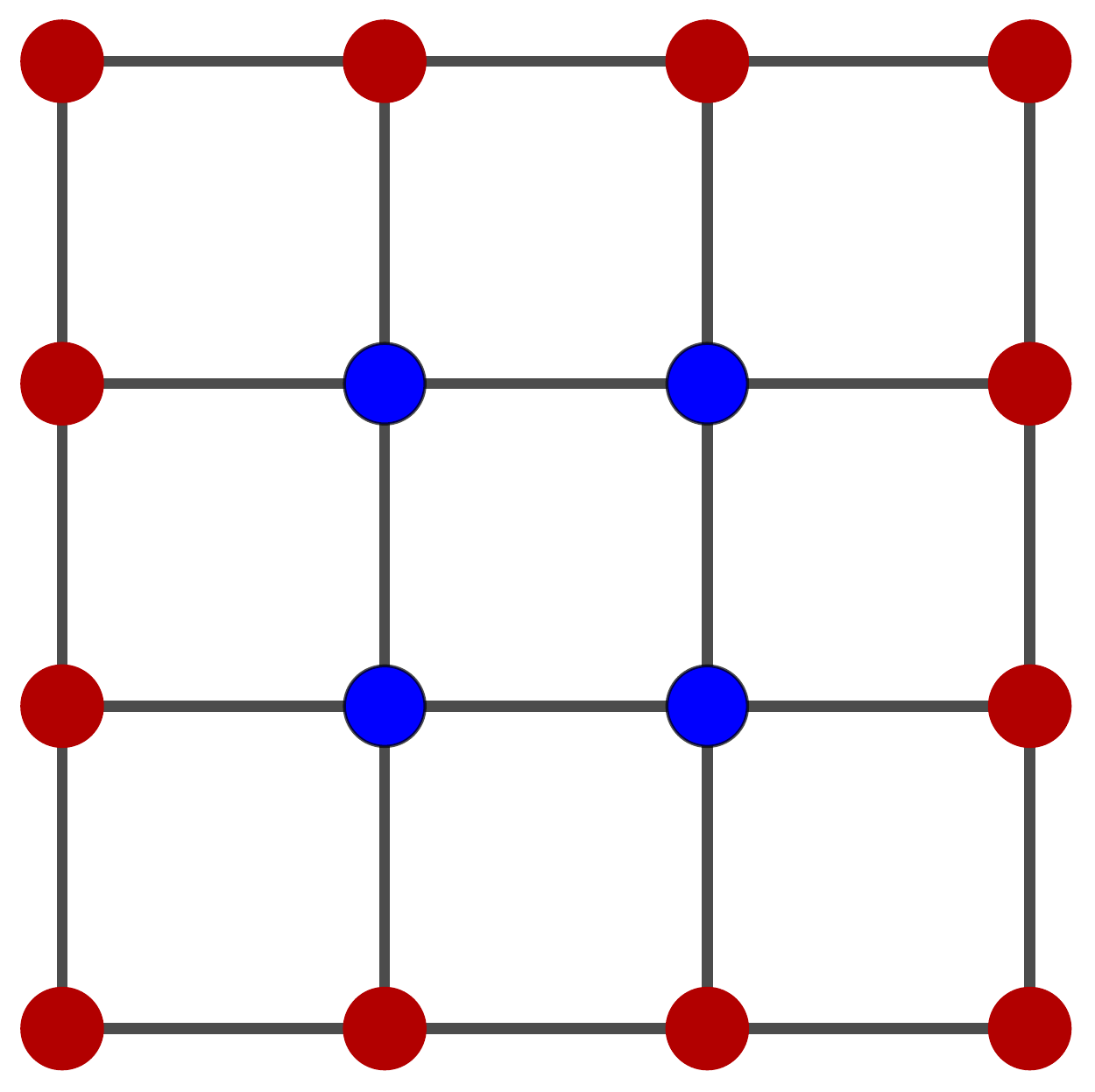}};
\node at (8.5,0) {\includegraphics[width=0.18\textwidth]{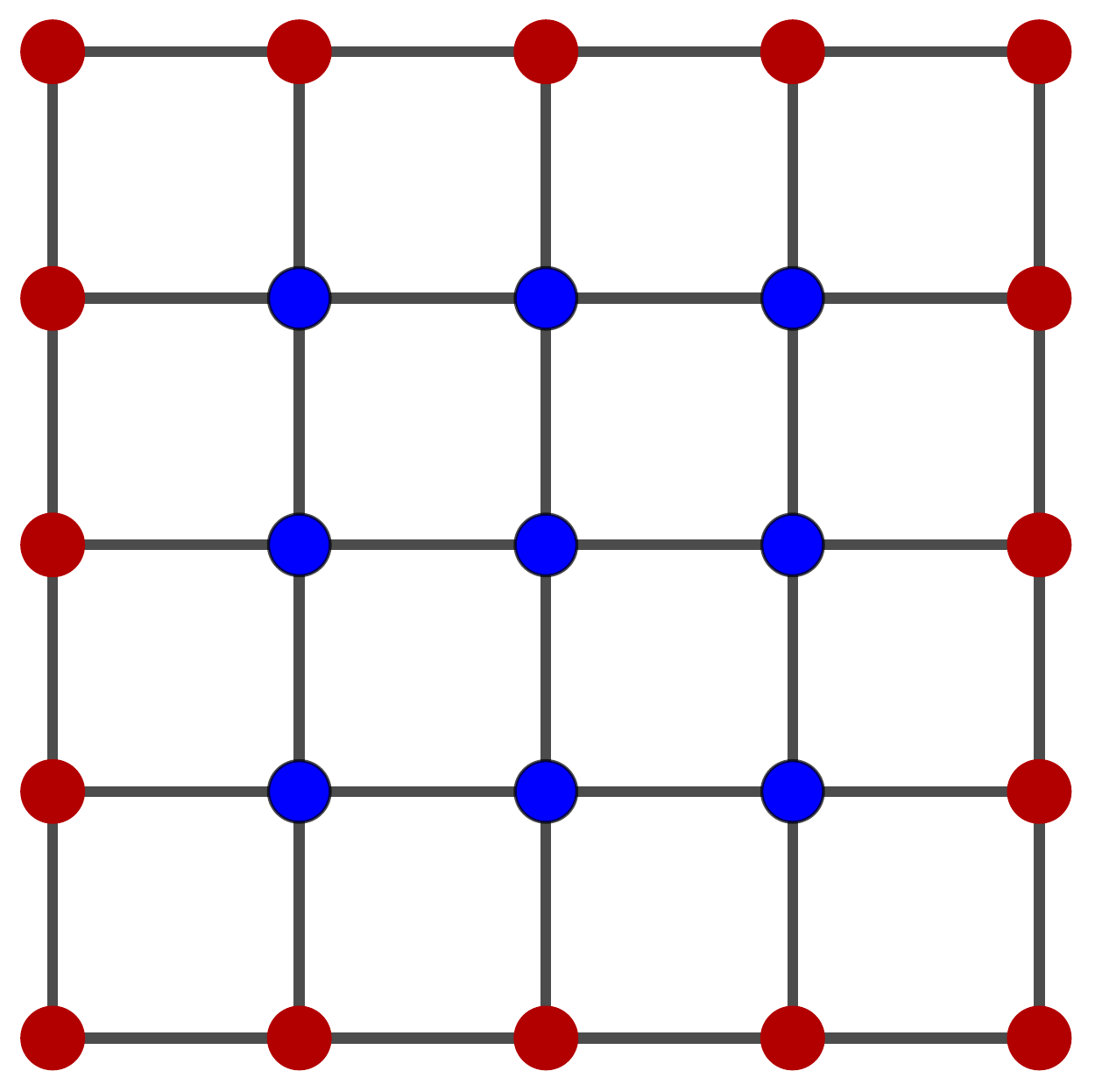}};
\node at (7.25,-1.4) {$\partial G$};
\end{tikzpicture}
\caption{Grid graphs satisfy $|(\partial G)^*| = 4$ and $ |\partial G| \sim 4|V|^{1/2}$.}
\end{figure}
\end{center}
\vspace{-5pt}

\textbf{A Notion of Boundary.} Recall that
$u \in (\partial G)^*$ means that we ask for the existence of another vertex $v$ such all the neighbors of $u$ are not further away from $v$ than $d(u,v)$. We relax the condition to ask for the existence of another vertex $v$ such that the \textit{average} neighbor of $u$ is
closer to $v$ than $d(u,v)$. Formally, 
$$\partial G = \left\{u \in V \big|  ~\exists v \in V:  ~  \frac{1}{\deg(u)} \sum_{(u, w) \in E} d(w,v) < d(u,v)  \right\}.$$
We first note that this is indeed a relaxation and that $\partial G \supseteq (\partial G)^*$.
\begin{proposition} We have $(\partial G)^* \subseteq \partial G$.
\end{proposition}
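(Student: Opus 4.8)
The plan is to show that the \emph{same} vertex $v$ which witnesses $u \in (\partial G)^*$ also witnesses $u \in \partial G$, and in fact to extract a quantitative improvement: the average of $d(w,v)$ over the neighbours $w$ of $u$ will drop below $d(u,v)$ by a definite amount, namely at least $1/\deg(u)$.

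First I would dispose of trivialities: if $u$ is isolated, the averaging condition in the definition of $\partial G$ is ill-posed, so I tacitly restrict to vertices of positive degree (this is harmless, and in any case forced once one speaks of diameters). Fix $u \in (\partial G)^*$ and a vertex $v$ with $d(w,v) \leq d(u,v)$ for every neighbour $w$ of $u$. One should note that $v \neq u$: otherwise any neighbour $w$ of $u$ would give $1 = d(w,u) \leq d(u,u) = 0$, which is absurd. Hence $d(u,v) \geq 1$.

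The one real point is the following observation. Let $u = x_0, x_1, \dots, x_k = v$ be a shortest path from $u$ to $v$, so that $k = d(u,v) \geq 1$. Then $w_0 := x_1$ is a neighbour of $u$ with $d(w_0,v) = d(u,v) - 1$. Thus among the $\deg(u)$ summands of $\sum_{(u,w) \in E} d(w,v)$, at least one equals $d(u,v) - 1$, while every summand is at most $d(u,v)$ by the choice of $v$. Consequently
$$\sum_{(u,w) \in E} d(w,v) \;\leq\; \bigl(\deg(u) - 1\bigr)\, d(u,v) \;+\; \bigl(d(u,v) - 1\bigr) \;=\; \deg(u)\, d(u,v) - 1,$$
and dividing by $\deg(u)$ yields
$$\frac{1}{\deg(u)} \sum_{(u,w) \in E} d(w,v) \;\leq\; d(u,v) - \frac{1}{\deg(u)} \;<\; d(u,v),$$
so that $u \in \partial G$ with the same witness $v$.

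I do not expect any genuine obstacle here; the only subtlety is that the defining inequality for $\partial G$ is \emph{strict}, and this strictness is supplied precisely by the geodesic neighbour $w_0$. It is worth remarking that the argument also makes transparent why the inclusion is in general proper, as the grid graphs of Figure~5 already indicate.
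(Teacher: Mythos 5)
Your argument is correct and is essentially the same as the paper's: both use the same witness $v$ and observe that the first vertex on a shortest path from $u$ to $v$ is a neighbour at distance $d(u,v)-1$, which forces the average strictly below $d(u,v)$. The extra remarks (that $v\neq u$, and the quantitative gap of $1/\deg(u)$) are fine but not needed.
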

\begin{proof}
Suppose $u \in (\partial G)^*$. Then there exists $v \in V$ such that
$$ (u,w) \in E \implies d(v, w) \leq d(v,u).$$
Moreover, since there exists at least one shortest path from $v$ to $u$, we know that $u$ has at least one neighbor $z \in V$ such that $d(v,z) = d(v,u) - 1$. Therefore
\begin{align*}
  \frac{1}{\deg(u)} \sum_{(u, w) \in E} d(w,v) &\leq \frac{\deg(u) -1}{\deg(u)} d(u,v) + \frac{1}{\deg(u)} (d(u,v) -1) \\
  &< d(u,v).
  \end{align*}
\end{proof}

Another nice aspect of the definition is the connection to potential theory. Recall that for a graph on $n$ vertices, which we label $\left\{1,2,\dots, n\right\}$, the degree matrix $D \in \mathbb{R}^{n \times n}$ is a diagonal matrix where $D_{ii} = \deg(i)$ while the adjacency matrix $A \in \left\{0,1\right\}^{n \times n}$ is given by
$$ A_{ij} = \begin{cases} 1 \qquad &\mbox{if}~(i,j) \in E \\ 0 \qquad &\mbox{otherwise.} \end{cases}$$
We define a notion of a Laplacian matrix $L \in \mathbb{R}^{n \times n}$ via
$ L = D-A.$
With this definition, we can define our boundary in an alternative way: a vertex $u \in V$ is a boundary point if there exists another vertex $v \in V$ such that the distance function from $v$, 
$f_v(w) = d(w,v)$,
satisfies $(Lf_v)(u) > 0$. Formally, we can write
$$\partial G = \left\{u \in V \big|  ~\exists v \in V: \quad (L f_v)(u) > 0  \right\}.$$
Being able to write the definition in terms of the Laplacian $L$ opens up a connection to random walks which in turn allows for new techniques to be used. We note that the Laplacian of the distance function is a well-studied object in differential geometry, we refer to \cite{ca, ma} and references therein.

\begin{center}
\begin{figure}[h!]
\begin{tikzpicture}
\node at (0,0) {\includegraphics[width=0.12\textwidth]{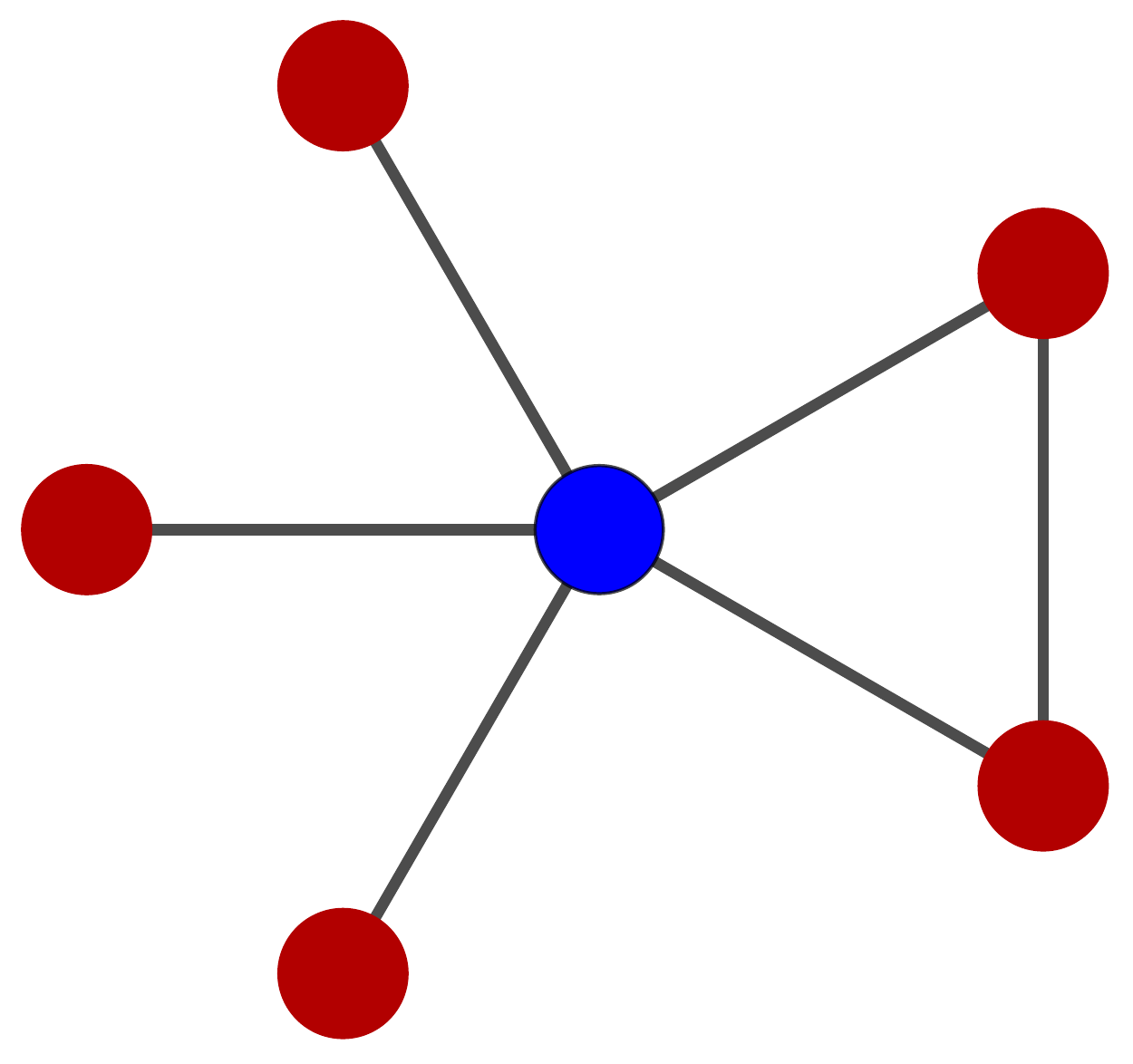}};
\node at (3,0) {\includegraphics[width=0.17\textwidth]{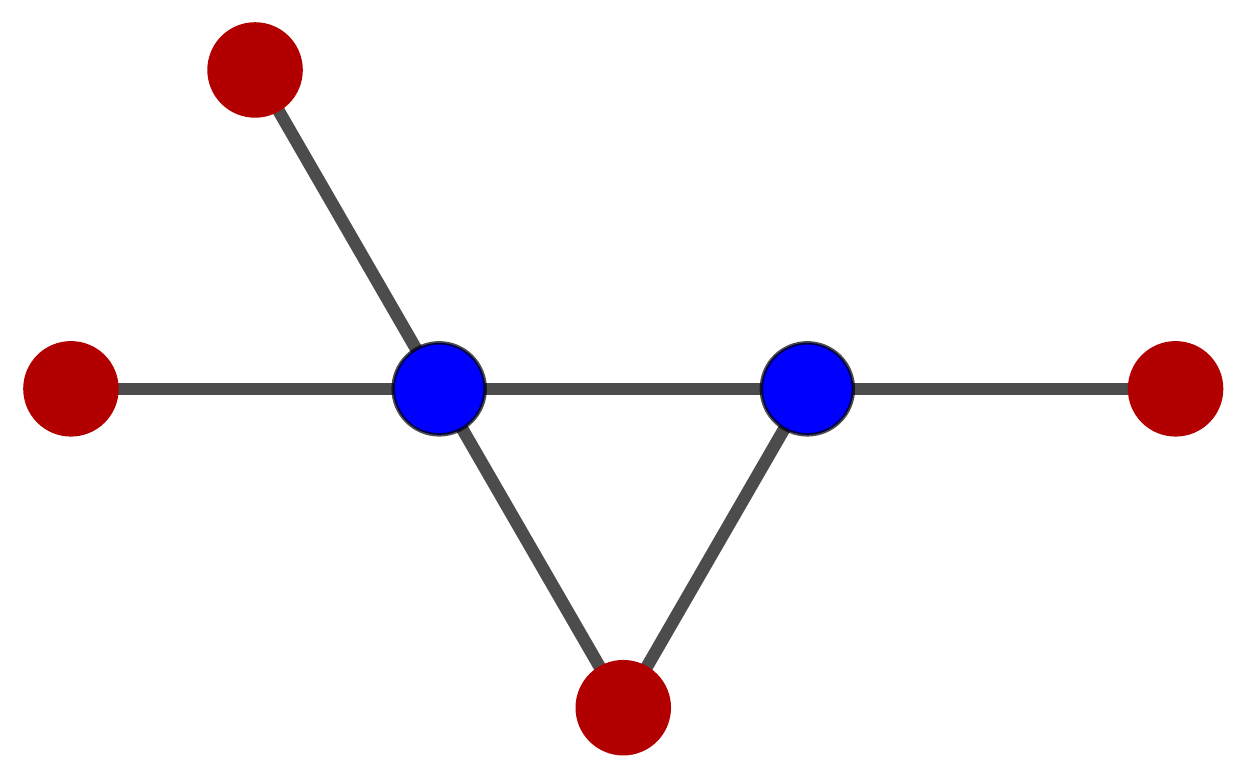}};
\node at (6,0) {\includegraphics[width=0.17\textwidth]{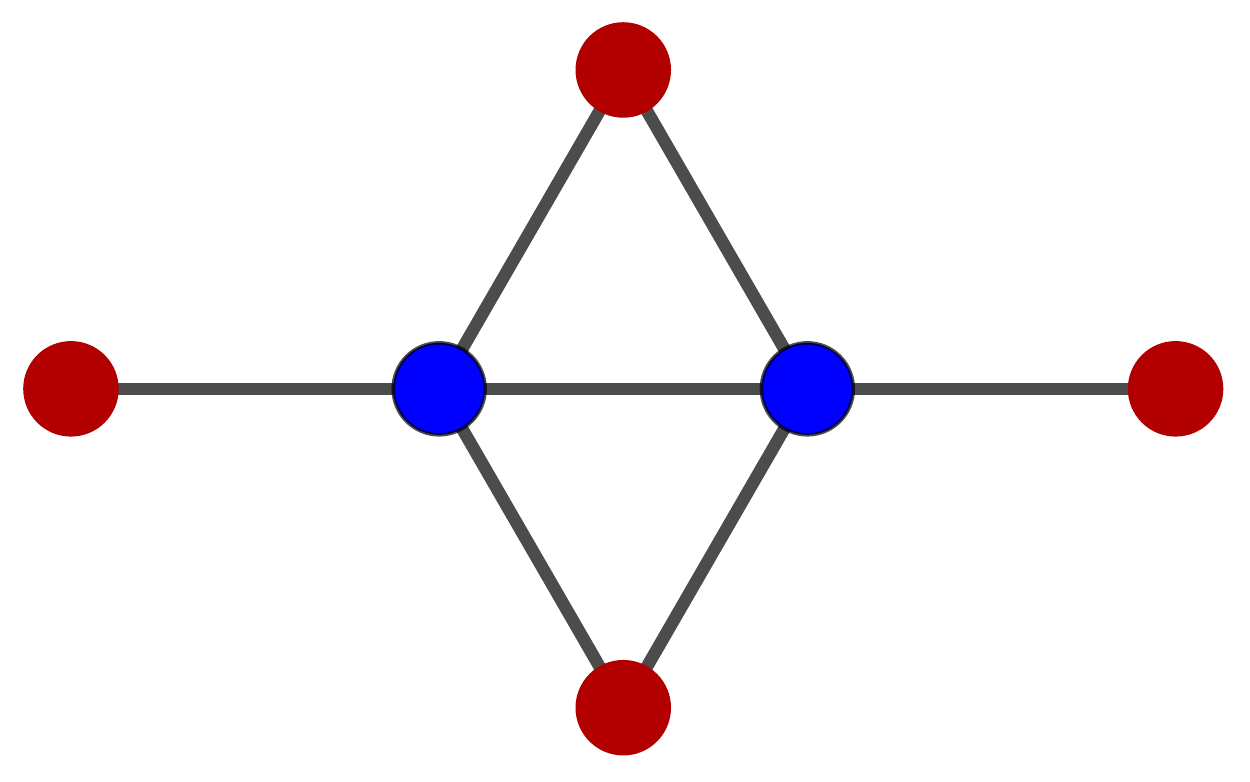}};
\node at (9,0) {\includegraphics[width=0.17\textwidth]{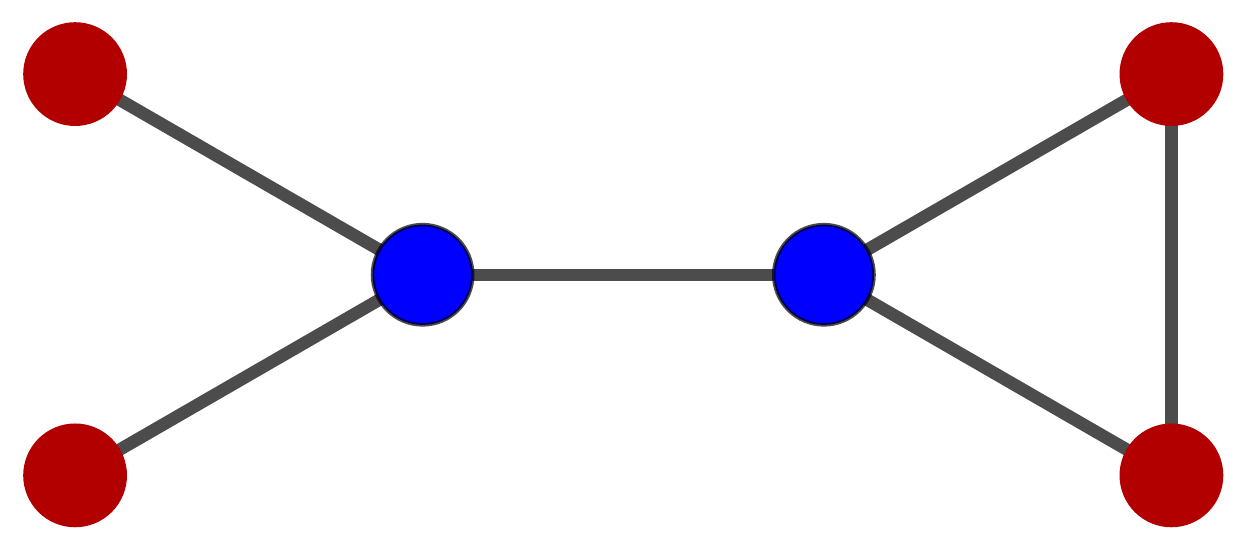}};
\node at (0,-1.7) {\includegraphics[width=0.12\textwidth]{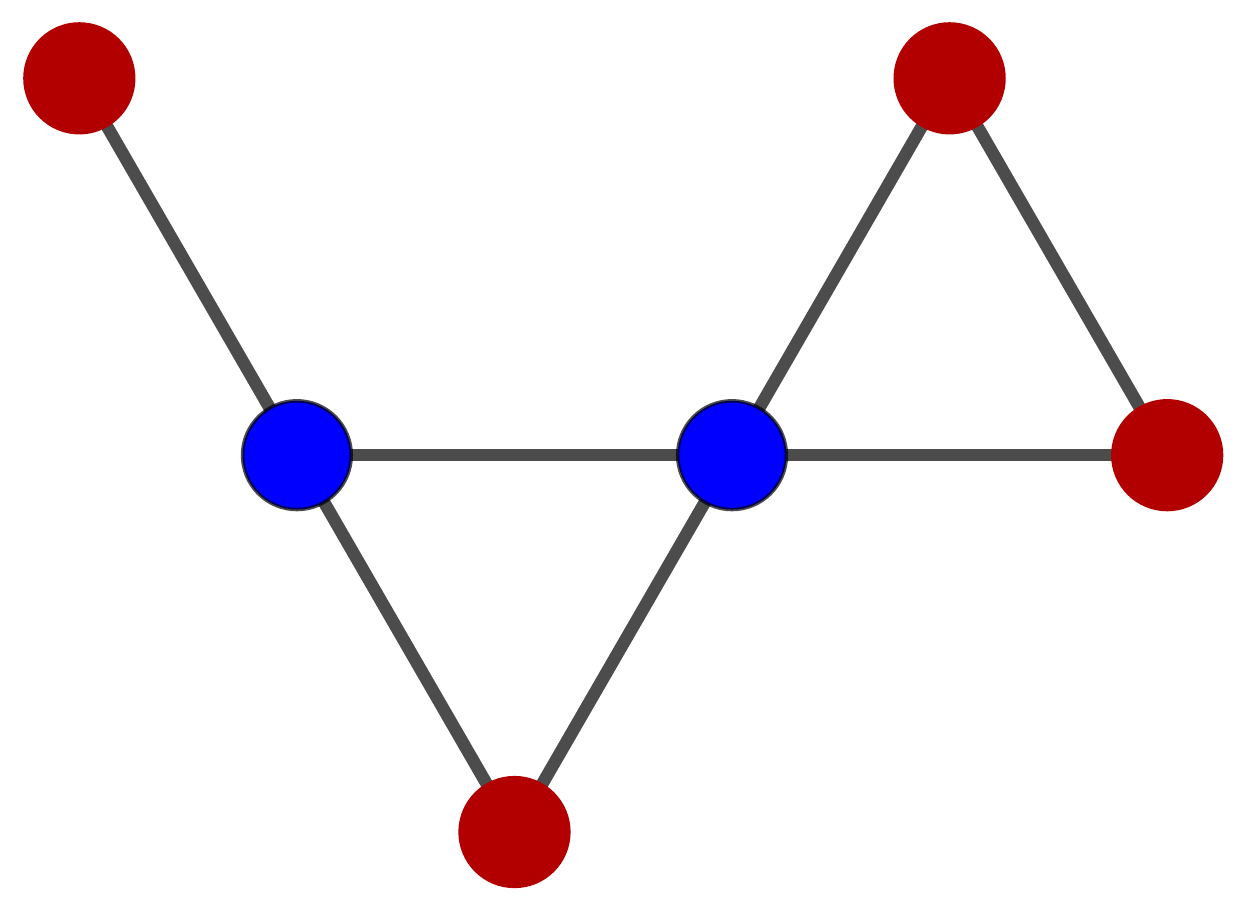}};
\node at (3,-1.7) {\includegraphics[width=0.12\textwidth]{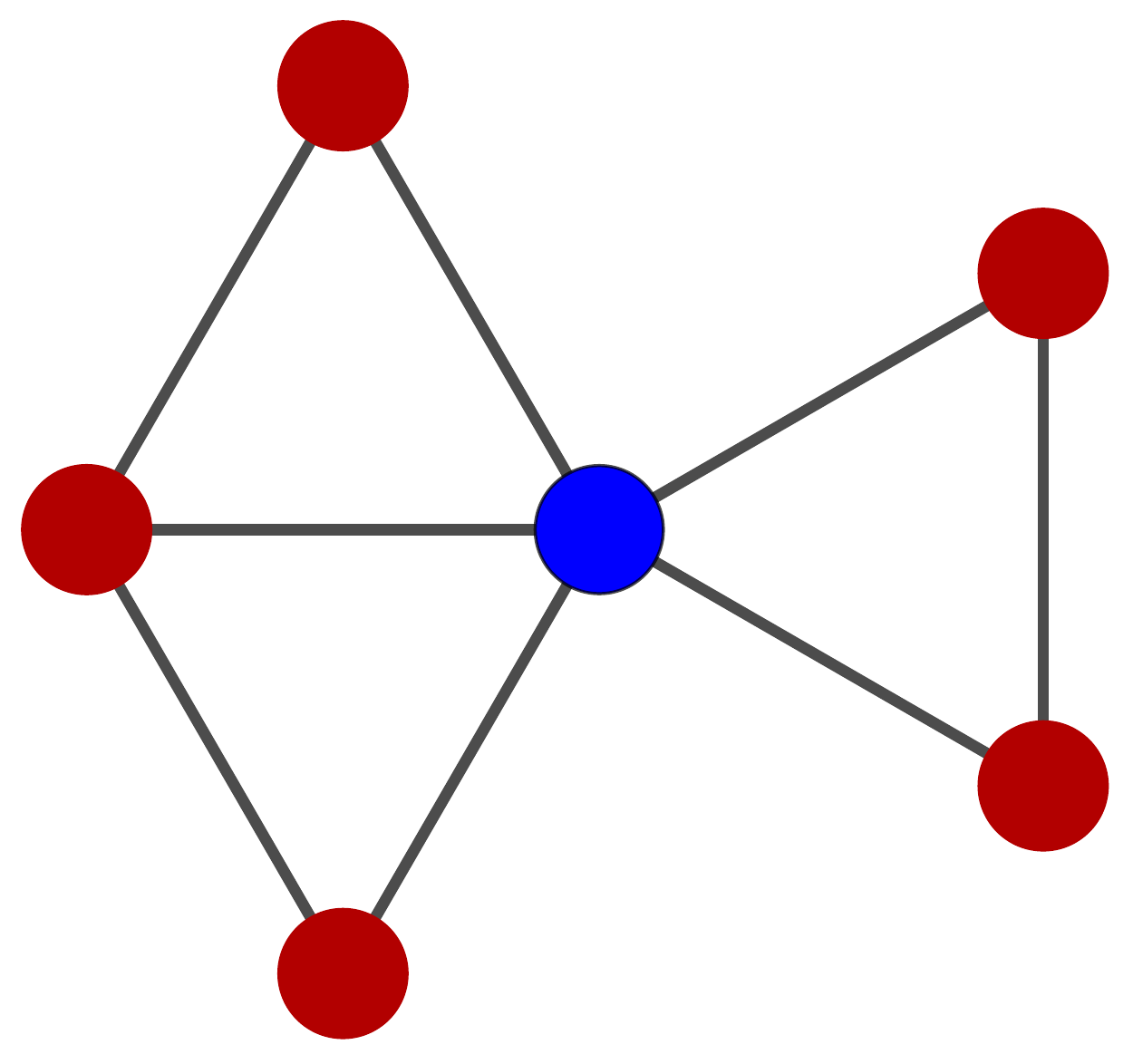}};
\node at (6,-1.7) {\includegraphics[width=0.11\textwidth]{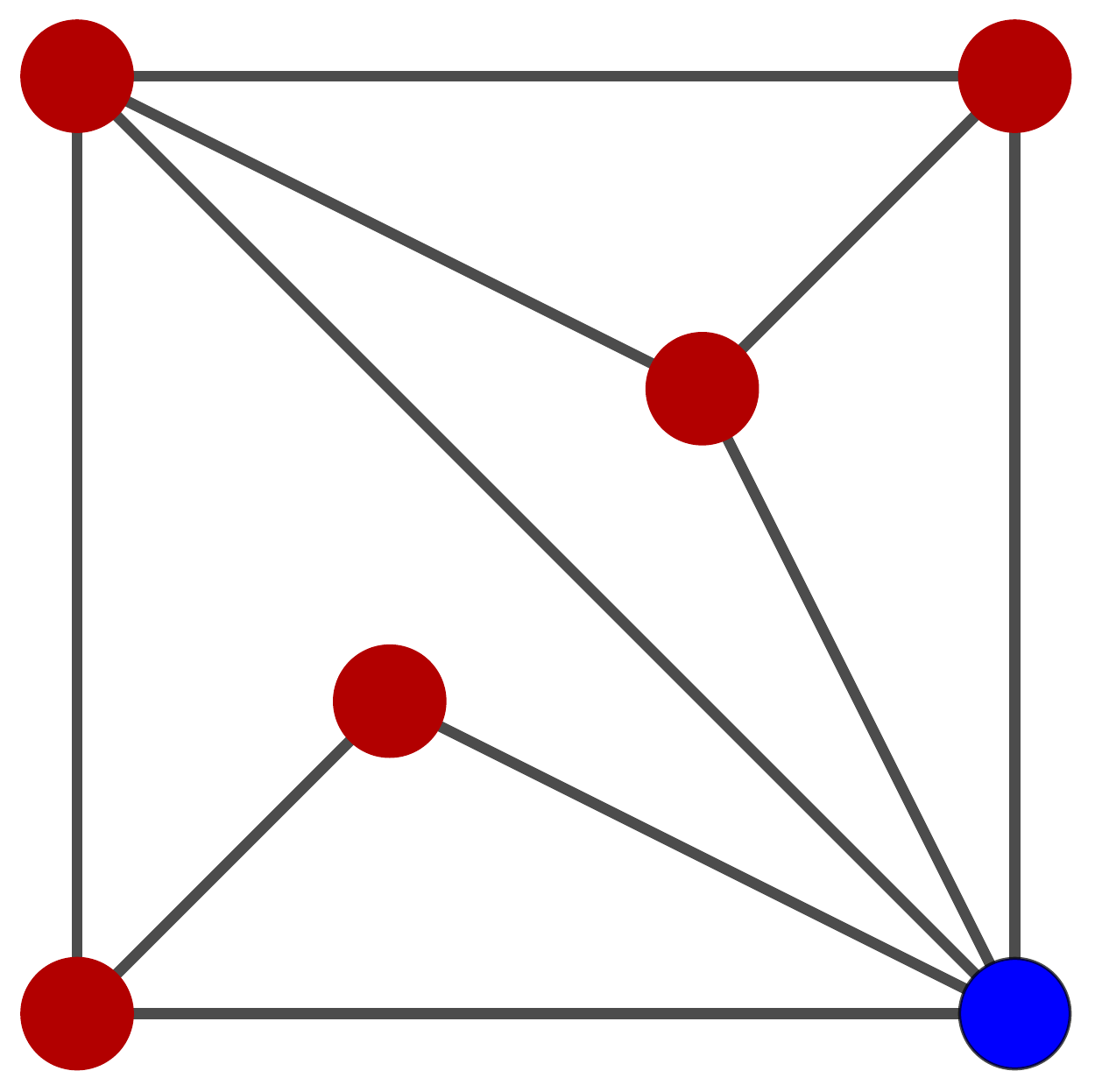}};
\node at (9,-1.7) {\includegraphics[width=0.11\textwidth]{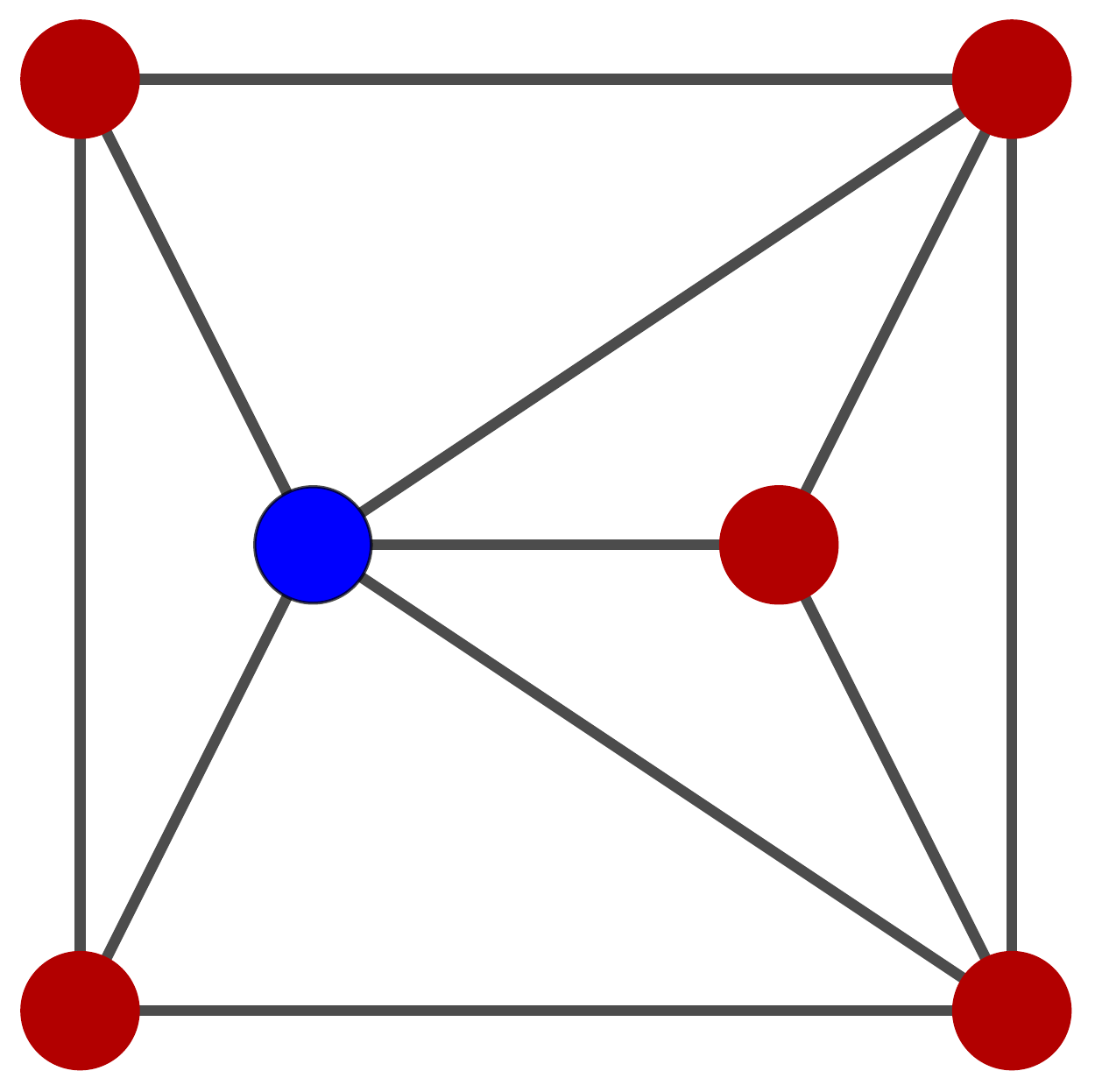}};
\end{tikzpicture}
\caption{Examples where $\partial G$ and $(\partial G)^*$ coincide.}
\end{figure}
\end{center}

\section{ Results}
\subsection{Basic Facts.} We start with basic results: on trees, $\partial G$ is what we expect.
\begin{proposition}[Trees and Leaves]
If $G$ is a tree, then $\partial G$ are the vertices of degree 1 (the `leaves'). For any connected graph, vertices of degree 1 are in $\partial G$. \end{proposition}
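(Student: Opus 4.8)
The plan is to handle the two assertions separately, starting with the easier second one. For any connected graph $G$ with a vertex $u$ of degree $1$, let $w$ be its unique neighbor and pick any vertex $v \ne u$. I claim $u \in \partial G$ witnessed by $v$: indeed every shortest path from $v$ to $u$ must pass through $w$, so $d(w,v) = d(u,v) - 1$, and since $\deg(u) = 1$ the average $\frac{1}{\deg(u)}\sum_{(u,z)\in E} d(z,v) = d(w,v) = d(u,v) - 1 < d(u,v)$. This also follows immediately from Proposition 1 once one observes that a degree-$1$ vertex lies in $(\partial G)^*$ (the same $v$ works), but the direct computation is just as short. Note connectedness is needed only to ensure such a $v$ exists with finite distance.

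For the tree statement I would argue in two directions. The inclusion ``leaves $\subseteq \partial G$'' is the special case just proved. For the reverse inclusion, suppose $u$ is a vertex of a tree $G$ with $\deg(u) = k \ge 2$; I must show $u \notin \partial G$, i.e. for every $v \in V$ we have $\frac{1}{k}\sum_{(u,w)\in E} d(w,v) \ge d(u,v)$. Fix $v$. Removing $u$ from the tree splits $G$ into $k$ components $C_1,\dots,C_k$, one hanging off each neighbor $w_1,\dots,w_k$ of $u$. The vertex $v$ lies in exactly one of them, say $v \in C_1$ (or $v = u$, a trivial case). Then the unique path from $v$ to $w_1$ does not revisit $u$, so $d(w_1,v) = d(u,v) - 1$; but for every other neighbor $w_j$ with $j \ge 2$, the unique path from $v$ to $w_j$ must pass through $u$, so $d(w_j,v) = d(u,v) + 1$. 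Hence
\[
\frac{1}{k}\sum_{(u,w)\in E} d(w,v) = \frac{1}{k}\Bigl( (d(u,v)-1) + (k-1)(d(u,v)+1) \Bigr) = d(u,v) + \frac{k-2}{k} \ge d(u,v),
\]
using $k \ge 2$. Since $v$ was arbitrary, $u \notin \partial G$.

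The main point — really the only point that needs care — is the structural fact that in a tree, deleting an internal vertex $u$ disconnects the graph into exactly $\deg(u)$ pieces, with each neighbor of $u$ in a distinct piece, so that from any fixed $v$ exactly one neighbor of $u$ is ``closer'' (by one) and all the others are ``farther'' (by one). This is where acyclicity is used essentially: in a general graph a vertex $v$ could be close to several neighbors of $u$ via distinct short paths avoiding $u$, which is exactly what makes the boundary larger than the leaf set in, say, grid graphs. I expect no genuine obstacle here; the only thing to be slightly careful about is the degenerate case $v = u$, where the sum equals $\frac{1}{k}\sum_j d(w_j,u) = 1 > 0 = d(u,v)$, so the defining inequality fails there as well and $u$ is correctly excluded.
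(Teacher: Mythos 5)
Your proof is correct and follows essentially the same route as the paper: for a leaf the unique neighbor is one step closer to any witness $v$, and for an internal tree vertex exactly one neighbor is closer (by $1$) and the remaining $\deg(u)-1$ are farther (by $1$), so the average distance is at least $d(u,v)$. Your explicit component decomposition of $G-u$ and the treatment of the degenerate case $v=u$ are slightly more careful than the paper's version, but the underlying computation is identical.
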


The second statement tells us that there always is a boundary: the proof is simple and exploits that any two vertices at distance $\diam(G)$ from each other are necessarily both in the boundary (this is true for both $(\partial G)^*$ and $\partial G$).

\begin{proposition}[Two boundary vertices]
For any connected graph with at least two vertices, we have $|\partial G| \geq 2$. If $|\partial G| = 2$, then $G$ is a path.
\end{proposition}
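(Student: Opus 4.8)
The plan is to prove the two assertions separately. For $|\partial G|\ge 2$ I follow the idea flagged above: fix vertices $a,b$ with $d(a,b)=\diam(G)=:D$ (so $D\ge 1$). Every neighbour $w$ of $a$ satisfies $d(w,b)\le D$, and the neighbour of $a$ lying on a shortest $a$–$b$ path has $d(w,b)=D-1$; hence the average of $d(\cdot,b)$ over the neighbours of $a$ is at most $D-\tfrac{1}{\deg(a)}<D=d(a,b)$, so $a\in\partial G$, and symmetrically $b\in\partial G$. Since $a\ne b$ this gives $|\partial G|\ge 2$, and if $|\partial G|=2$ it forces $\partial G=\{a,b\}$ with $d(a,b)=D$.

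For the rigidity statement, assume $|\partial G|=2$, so $\partial G=\{a,b\}$ and every other vertex is interior. The engine is the elementary fact $(\star)$: if $u\notin\partial G$ and $v\ne u$, then $u$ has a neighbour $w$ with $d(w,v)=d(u,v)+1$. Indeed, if every neighbour of $u$ were at distance $\le d(u,v)$ from $v$, then — since the first step of a shortest $u$–$v$ path is a neighbour at distance $d(u,v)-1$ — the average of $d(\cdot,v)$ over the neighbours of $u$ would be strictly below $d(u,v)$, contradicting $u\notin\partial G$. I first use $(\star)$ to recover the global geometry. Given any $u\notin\{a,b\}$, start at $q_1=u$ and repeatedly apply $(\star)$ with $v=a$ to build a walk $q_1,q_2,\dots$ along which $d(\cdot,a)$ increases by exactly $1$ at each step; since $d(\cdot,a)\le D$ this walk must reach $\{a,b\}$, and it cannot stop at $a$ (the values $d(q_i,a)$ are positive), so it ends at $q_m=b$ with $d(u,a)+(m-1)=D$. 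As $q_1,\dots,q_m$ is a walk of length $m-1$ from $u$ to $b$, we have $m-1\ge d(u,b)$, whence $D=d(u,a)+(m-1)\ge d(u,a)+d(u,b)\ge d(a,b)=D$, forcing $d(u,a)+d(u,b)=D$ for every vertex $u$. Thus $V$ splits into layers $V_k=\{u:d(u,a)=k\}$, $0\le k\le D$, with $V_0=\{a\}$, $V_D=\{b\}$, $d(\cdot,b)=D-d(\cdot,a)$, and every edge inside a single layer or between consecutive layers.

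To conclude: if every layer $V_k$ is a singleton, then $G$ is a connected graph on these $D+1$ vertices whose edges all join consecutive layers, which forces $G=P_{D+1}$, a path. Otherwise some layer $V_k$ with $1\le k\le D-1$ contains two distinct vertices $c,c'$, so $d(c,c')\ge 1$; now run $(\star)$ from $c$ toward $v=c'$ to obtain a walk $c=q_1,\dots,q_m$ along which $d(\cdot,c')$ increases by $1$ at each step and which terminates in $\{a,b\}$ — the walk is finite since $d(\cdot,c')\le D$, and since $d(q_i,c')\ge d(c,c')\ge 1$ every $q_i$ differs from $c'$, so it can only stop at a boundary vertex. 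Using $d(q_m,c')=d(c,c')+(m-1)$ together with the walk bound $m-1\ge d(c,q_m)$: if $q_m=a$ then $k=d(a,c')\ge d(c,c')+d(c,a)=d(c,c')+k$, while if $q_m=b$ then $D-k=d(b,c')\ge d(c,c')+d(c,b)=d(c,c')+(D-k)$; either way $d(c,c')\le 0$, contradicting $c\ne c'$. Hence all layers are singletons and $G$ is a path.

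The step I expect to be the main obstacle is this last one: recovering the layering from $(\star)$ is routine, but ruling out a fat layer requires the less obvious idea of aiming $(\star)$ not at $a$ or $b$ but at a second vertex $c'$ lying in the \emph{same} layer as $c$, and then pitting the length of the resulting monotone walk against the triangle inequality $d(q_m,c')\le d(q_m,c)+d(c,c')$. Once that comparison is chosen the contradiction is immediate, but identifying it is the real content of the proof.
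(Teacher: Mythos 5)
Your proof of $|\partial G|\ge 2$ is the same as the paper's: a diametral pair $a,b$ lies in $\partial G$ because the neighbour of $a$ on a shortest $a$--$b$ path pulls the average strictly below $d(a,b)=\diam(G)$. For the rigidity statement the two arguments genuinely diverge. The paper does not prove it directly: it observes $(\partial G)^*\subseteq\partial G$, so $|\partial G|=2$ forces $|(\partial G)^*|\le 2$, and then invokes the Hasegawa--Saito classification of graphs with $|(\partial G)^*|=2$ to conclude $G$ is a path. You instead give a self-contained argument built on the observation $(\star)$ that a non-boundary vertex $u$ always has, for each $v\ne u$, a neighbour strictly farther from $v$; iterating $(\star)$ produces a walk along which $d(\cdot,v)$ increases by exactly $1$ per step and which can only terminate at a boundary vertex. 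Aiming this walk at $a$ yields $d(u,a)+d(u,b)=\diam(G)$ for all $u$, hence a layer decomposition with singleton end-layers, and aiming it at a second vertex $c'$ in the same layer as $c$ and comparing the walk length with $d(c,q_m)$ kills any fat layer. I checked the two cases $q_m=a$ and $q_m=b$: both correctly reduce to $d(c,c')\le 0$ using $d(a,c)=d(a,c')$ (resp.\ $d(b,c)=d(b,c')$), and the degenerate issues ($q_i\ne a$, $q_i\ne c'$, termination of the walk) are all handled. Your argument is correct and buys self-containedness at the cost of length; the paper's citation buys brevity and, implicitly, the stronger fact that even $|(\partial G)^*|=2$ characterizes paths. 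Note that your walk argument only uses the weaker hypothesis that non-boundary vertices satisfy $(\star)$, which holds verbatim for $(\partial G)^*$ as well, so with no extra work it actually reproves the Hasegawa--Saito special case the paper cites.
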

\begin{proof} It is easy to see that if $u,v \in V$ satisfy $d(u,v) = \diam(G)$, then $u,v \in \partial G$. Thus $|\partial G| \geq 2$. Suppose now that $|\partial G| = 2$: then, since $(\partial G)^* \subseteq \partial G$, we have $|(\partial G)^*| \leq 2$ and a result of  Hasegawa \& Saito \cite{has} implies that $G$ is a path. 
\end{proof}

Hasegawa \& Saito \cite{has} classify all graphs where $|(\partial G)^*| = 3$. This was later extended by M\"uller-P\'or-Sereni \cite{mu2} who classify all graphs for which $|(\partial G)^*| = 4$, there are nine different families of graphs. Given the complexity of characterizing $|(\partial G)^*| = 4$, it stands to reason that 
a characterization of graphs for which $|(\partial G)^*| = 5$ might be out of reach. Since $|\partial G| \geq |(\partial G)^*|$, the characterization problem for our boundary might be simpler; we have not pursued this here.

\subsection{Euclidean Approximation}
We continue with a simple result that shows that our notion of boundary makes sense for grid graphs approximating topologically simple Euclidean
domains with nice smooth boundary. Our setting will be as follows: we assume that $\Omega \subset \mathbb{R}^d$ is a bounded
domain and we assume that the graph $G$ is generated by taking a parameter $\lambda$, rescaling the
standard lattice $ \lambda \mathbb{Z}^d = \left\{\lambda v: v \in \mathbb{Z}^d\right\}$ and setting
$ V_{\lambda} = (\lambda \mathbb{Z}^d) \cap \Omega$
while the set of edges $E_{\lambda}$ is defined as $(u,v) \in E_{\lambda}$ whenever $\| u - v\|_2 = \lambda$. Note that each vertex
has exactly $2d$ neighbors unless it is close to the boundary of $\Omega$.
We will now prove that $\partial G_{}$ is comprised of vertices with degree less than $2d$ and vertices for which there exists a form of geodesic non-uniqueness (see also Fig. \ref{fig:geonon}).

\begin{proposition}[Domains in $\mathbb{R}^d$] If $u \in \partial G_{}$ has degree $2d$, then there exists a form of geodesic non-uniqueness:  there exists a vertex $v$ such that (1) either $d(u,v) = d(w,v)$ for a neighbor $w$ of $u$ or (2) there are two antipodal neighbors $v \pm \lambda e_i$ with
$$ d(u \pm \lambda e_i , v) = d(u, v) - 1.$$
 \end{proposition}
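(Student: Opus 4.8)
The plan is to unwind the definition of $\partial G$ at the vertex $u$ and then run a short case analysis, one coordinate direction at a time. Fix a vertex $v \in V$ witnessing $u \in \partial G$, i.e.
$$\frac{1}{\deg(u)} \sum_{(u,w) \in E} d(w,v) < d(u,v),$$
and set $r = d(u,v)$. Since $u$ has degree $2d$ in $G_\lambda$ and the only lattice points at Euclidean distance $\lambda$ from $u$ are $u \pm \lambda e_1, \dots, u \pm \lambda e_d$, these $2d$ points are exactly the neighbors of $u$. The displayed inequality is therefore equivalent to
$$\sum_{i=1}^{d} \Big( \big[ d(u + \lambda e_i, v) - r \big] + \big[ d(u - \lambda e_i, v) - r \big] \Big) < 0.$$
The one elementary fact to be used repeatedly is that $|d(a,v) - d(b,v)| \le 1$ whenever $(a,b) \in E$, so each bracketed quantity lies in $\{-1,0,1\}$. (Equivalently this is just the assertion $(Lf_v)(u) > 0$, but the averaged form is all that is needed.)

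Next I would argue by contradiction, keeping the \emph{same} witness $v$. Suppose $v$ satisfies neither (1) nor (2). Failure of (1) says no neighbor $w$ of $u$ has $d(w,v) = r$, so every bracket above equals $\pm 1$. Failure of (2) says that for no index $i$ are both $d(u+\lambda e_i, v) - r$ and $d(u - \lambda e_i, v) - r$ equal to $-1$. Hence, for each $i$, the ordered pair of bracket values is one of $(-1,1)$, $(1,-1)$, $(1,1)$, and in each of these three cases the contribution $\big[d(u+\lambda e_i, v) - r\big] + \big[d(u - \lambda e_i, v) - r\big]$ is $\ge 0$. Summing over $i = 1, \dots, d$ gives a nonnegative total, contradicting the strict inequality above. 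Therefore the witness $v$ must satisfy (1) or (2). As a side remark, strict negativity of the sum forces at least one bracket to equal $-1$, i.e. some neighbor of $u$ lies at distance $r-1$ from $v$, which is the expected "a shortest $v$–$u$ path enters $u$ from a closer vertex" fact and explains why (1) and (2) are the two honest ways geodesic non-uniqueness at $u$ can occur in a lattice domain.

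I do not expect a genuine obstacle here: the argument reduces to the definition of $\partial G$ together with the trivial $\pm 1$ bound on the change of graph distance along an edge. Whatever care is required is purely bookkeeping — making sure the $v$ in the conclusion is the one supplied by the definition, correctly identifying the $2d$ neighbors of a degree-$2d$ lattice vertex as the antipodal pairs $u \pm \lambda e_i$, and verifying that each of the three surviving value-pairs $(-1,1),(1,-1),(1,1)$ has nonnegative sum. If anything deserves a further sentence it is the interpretive point that alternative (1) (a neighbor equidistant to $v$) and alternative (2) (two antipodal neighbors both strictly closer to $v$) correspond to the two distinct mechanisms by which an otherwise interior-looking degree-$2d$ vertex ends up in $\partial G$.
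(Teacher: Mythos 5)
Your proof is correct and follows essentially the same route as the paper: take the witness $v$ from the definition of $\partial G$, note that each neighbor's distance to $v$ differs from $d(u,v)$ by an element of $\{-1,0,1\}$, and show that if no neighbor is equidistant and no antipodal pair is simultaneously closer, then every antipodal pair contributes nonnegatively, contradicting the strict averaged inequality. The paper organizes the same case analysis via a partition of the coordinate directions into those admitting a shortest path through a neighbor and those that do not, but the content is identical; your bracket notation is if anything a cleaner bookkeeping device.
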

 
Case (1) can be seen for a cycle graph with odd cardinality. Case (2) is illustrated in Fig. \ref{fig:geonon}. The geodesic path from $v$ to $u$ is not unique (something that is very common in a grid graph) and that moreover there are two different shortest paths starting in \textit{opposite} directions. Both cases indicate that $\Omega$ has nontrivial topology.

\begin{center}
\begin{figure}[h!]
\begin{tikzpicture}
\node at (0,0) {\includegraphics[width=0.25\textwidth]{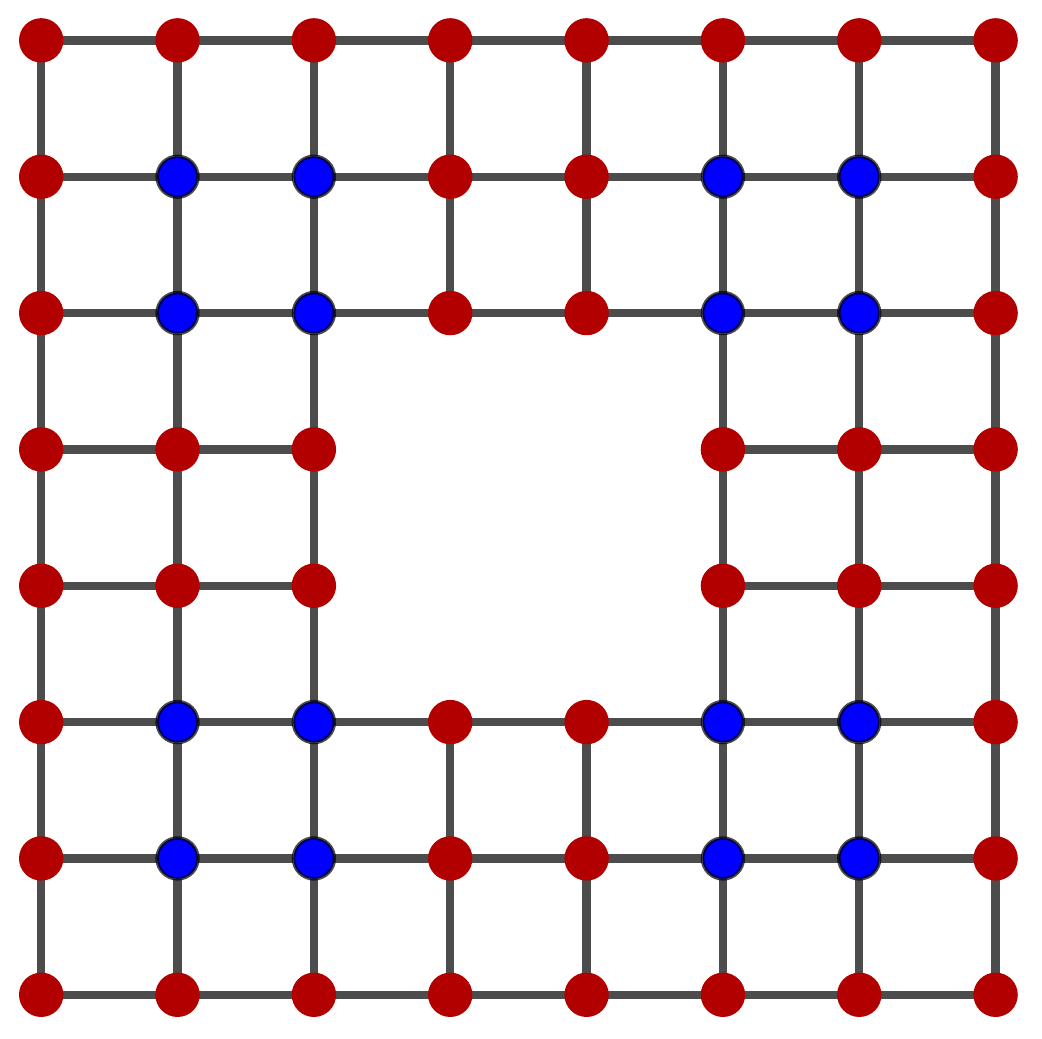}};
\node at (5,0) {\includegraphics[width=0.25\textwidth]{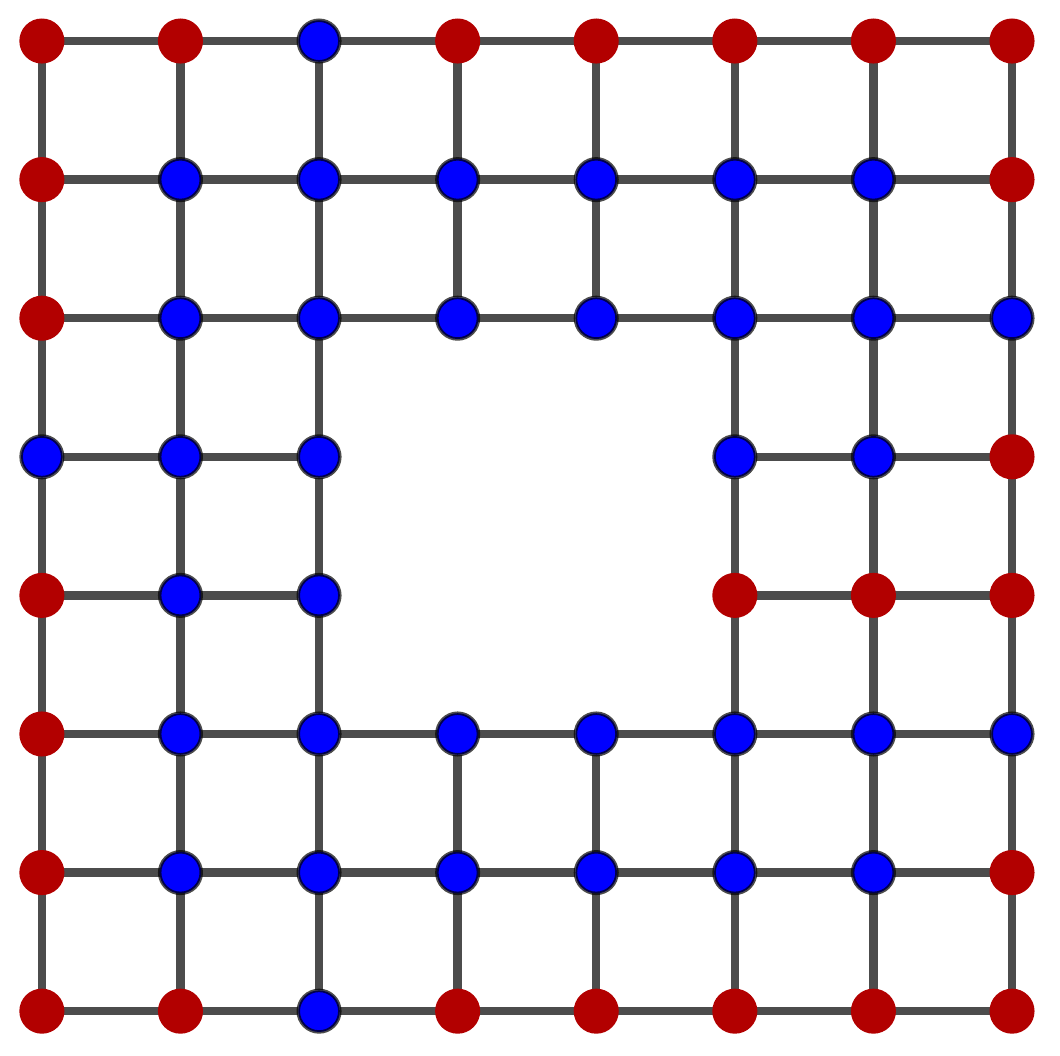}};
\draw [ultra thick] (4.37, 0.21) circle (0.2cm);
\node at (4.75, 0.2) {$v$};
\draw [ultra thick] (5.6, -0.2) circle (0.2cm);
\node at (5.2, -0.25) {$u$};
\end{tikzpicture}
\caption{Left: $\partial G$ on a graph. Right: points identified as being in $\partial G$ when judging from the vertex $v$. Non-uniqueness of shortest paths leads to additional structure.}
\label{fig:geonon}
\end{figure}
\end{center}

 \vspace{-20pt}

\subsection{Isoperimetric Inequality.} We can now discuss our main result: our definition of boundary $\partial G$ satisfies an isoperimetric principle. The isoperimetric inequality says that, when defined, the surface area of a domain $\Omega \subset \mathbb{R}^d$ satisfies
$$ \mbox{area}(\partial \Omega) \geq c\cdot \mbox{vol}(\Omega)^{\frac{d-1}{d}},$$
 where $\mbox{area}$ denotes the $(d-1)$-dimensional surface area: this inequality is true for all `reasonable' definitions of area and volume (of which there are several) and the optimal constant $c$ is attained when $\Omega$ is a ball. The most direct analogue of the isoperimetric inequality, an inequality relating $|\partial G|$ and the cardinality of the vertices $|V|$ cannot be true: 
 we face an additional obstruction that does not exist in Euclidean space and that is illustrated in Fig. \ref{fig:ob}. Not only do paths always have $|\partial G| = 2$ independently of their length, it is always possible to attach long paths to any given graph which ensures one can add as many non-boundary vertices as one wishes at the cost of adding at most two additional boundary vertices. 
   
\begin{center}
\begin{figure}[h!]
\begin{tikzpicture}
\node at (0,0) {\includegraphics[width=0.18\textwidth]{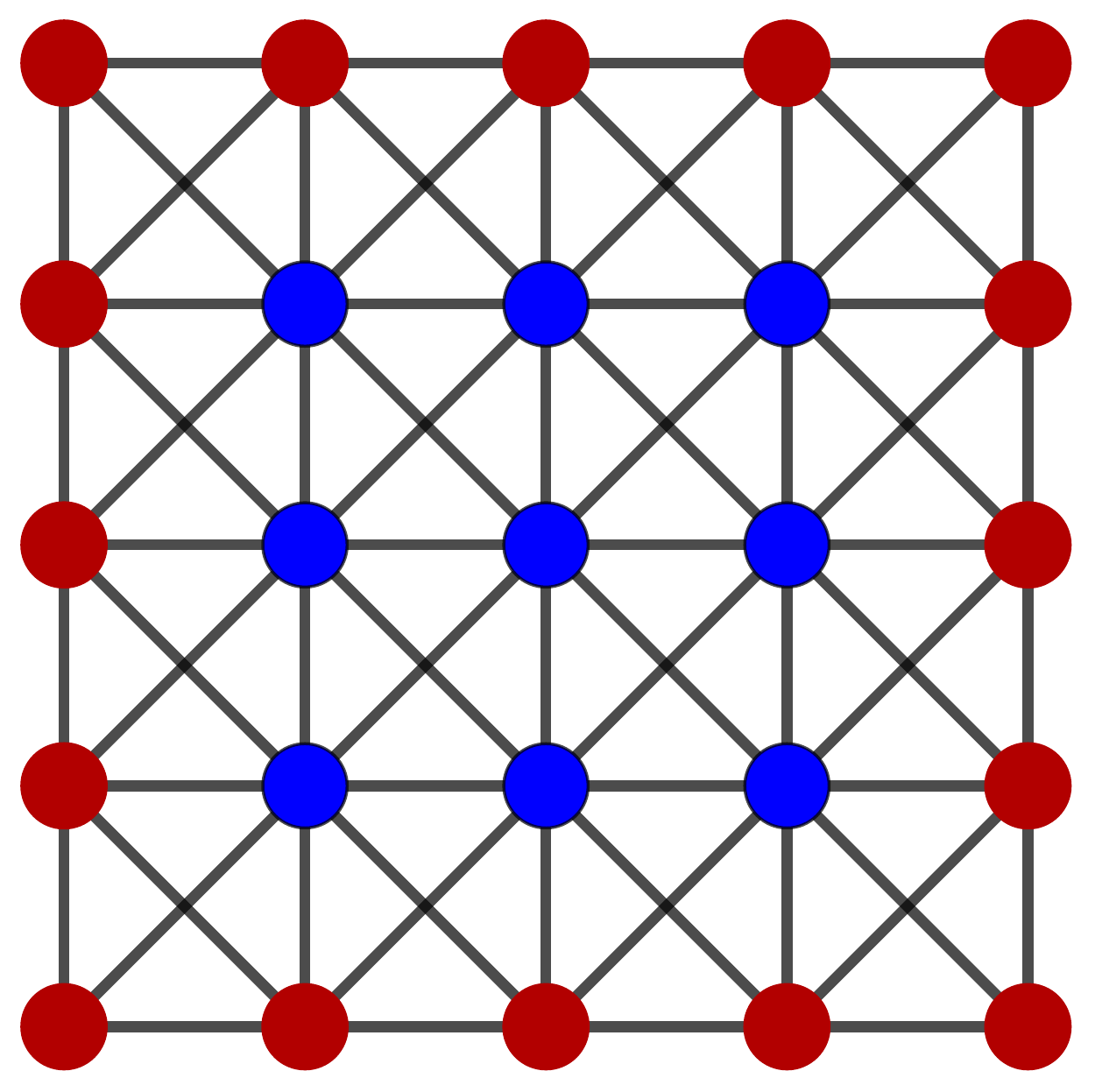}};
\node at (5,0) {\includegraphics[width=0.38\textwidth]{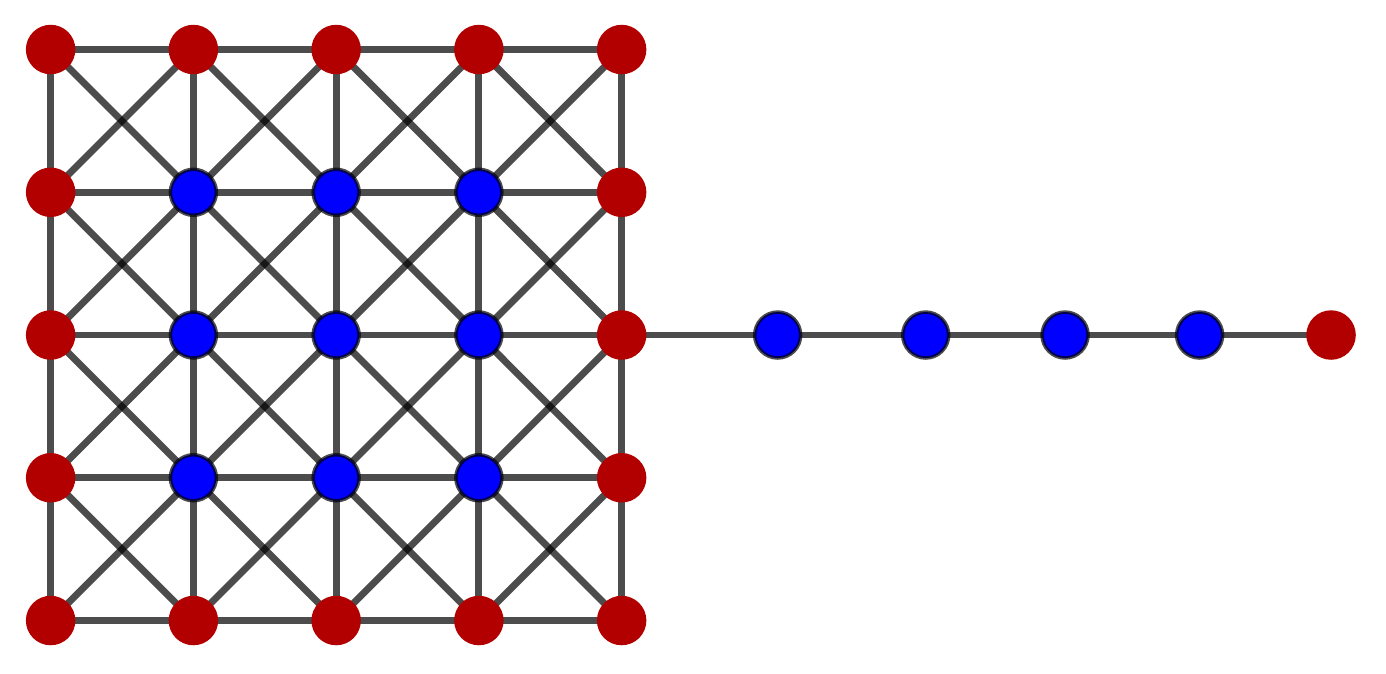}};
\end{tikzpicture}
\caption{It is always possible to create a lot of non-boundary edges by adding paths to any given graph.}
\label{fig:ob}
\end{figure}
\end{center}
\vspace{-10pt}

However, such long paths will also, once they are very long, eventually increase the diameter of the graph; taking this into consideration and adding the diameter as a relevant term, an isoperimetric inequality becomes possible. We first note that such an inequality exists in $\mathbb{R}^n$: the isodiametric inequality implies that 
$$ \diam(\Omega)  \geq c_2 \cdot \mbox{vol}(\Omega)^{1/d}.$$
 This implies, when combined with the classical isoperimetric inequality, that
$$ \mbox{area}(\partial \Omega) \geq c\cdot \mbox{vol}(\Omega)^{\frac{d-1}{d}}  \geq c \cdot c_2 \cdot \frac{\mbox{vol}(\Omega)}{\mbox{diam}(\Omega)}.$$
Since both inequalities are attained for the ball, the best constant is attained when $\Omega$ is a ball. Our main result will show that the isoperimetric principle, when stated in this particular form, has an analogue on graphs.

\begin{theorem}[Isoperimetric Inequality] If $G$ has maximal degree $\Delta$, then
$$ | \partial G| \geq \frac{1}{2\Delta} \frac{|V|}{\diam(G)}.$$
\end{theorem}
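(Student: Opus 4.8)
The plan is to exploit the Laplacian reformulation $\partial G = \{u : \exists v,\ (Lf_v)(u) > 0\}$ together with a clever averaging/counting argument over the distance functions $f_v$. The key observation is that the distance function $f_v(w) = d(w,v)$, viewed along a shortest path from $v$ to the farthest point in the graph, has to "come down" somewhere — it increases from $0$ up to as large as $\mathrm{diam}(G)$ and the total "descent" of the Laplacian must be controlled. More precisely, I would fix a vertex $v$ and consider the sum $\sum_{u \in V} (Lf_v)(u)$. Since $L = D - A$ and $L\mathbf{1} = 0$, this kind of global sum is degenerate, so instead I would look at $\sum_{u} \max\{(Lf_v)(u), 0\}$ — the total positive part — and show it is bounded below by something like $|V|/\mathrm{diam}(G)$ on average over $v$, while each individual positive contribution is at most $\Delta$ (since $|(Lf_v)(u)| \le \sum_{(u,w)\in E} |d(w,v) - d(u,v)| \le \deg(u) \le \Delta$).

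**The main estimate.**

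Concretely, fix $v$ and let $r = \mathrm{ecc}(v) = \max_w d(w,v)$. Partition $V$ into level sets $S_k = \{w : d(w,v) = k\}$ for $k = 0, 1, \dots, r$. I want to track how $f_v$ behaves and extract a telescoping identity. The cleanest route: consider a shortest path $v = x_0, x_1, \dots, x_r$ realizing the eccentricity (or, better, choose $v$ to be an endpoint of a diameter so $r = \mathrm{diam}(G)$). Then I would argue that the number of "interior" vertices $u$ with $(Lf_v)(u) \le 0$ along or near such a path is limited: at such a vertex, on average the neighbors are no closer to $v$ than $u$ is, which means $f_v$ is subharmonic there and the level sets cannot be "thinning out." The heuristic is that if $(Lf_v)(u) \le 0$ at many vertices in some level set region, then the level sets must grow, which contradicts having only $|V|$ vertices total spread over $\mathrm{diam}(G)$ levels. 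Averaging: some level set $S_k$ has $|S_k| \le |V|/\mathrm{diam}(G)$, and the "flux" of the distance function through that level must be absorbed by boundary vertices nearby.

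**The cleanest version of the counting.**

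Actually the sharpest approach is probably: pick $v, v'$ with $d(v,v') = \mathrm{diam}(G) =: D$. Look at $f_v$. For each vertex $u$, either $(Lf_v)(u) > 0$ (so $u \in \partial G$), or not. Consider the function $g = f_v$ restricted along level sets. I would set up the inequality $\sum_{u \in V}(Lf_v)(u) \cdot \phi(d(u,v)) \ge 0$ for a suitable test function $\phi$ (e.g. $\phi$ increasing), integrate by parts on the graph to get $\sum_{(u,w)\in E}(f_v(u) - f_v(w))(\phi(f_v(u)) - \phi(f_v(w))) \ge 0$, and combine with the observation that edges crossing between consecutive level sets carry controlled weight. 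The bound $|\partial G| \ge |V|/(2\Delta D)$ should then fall out of: total vertices $|V| = \sum_{k=0}^{D}|S_k|$; the number of edges between $S_k$ and $S_{k+1}$ is at least $\max(|S_k|,|S_{k+1}|)$ minus a correction coming from non-boundary vertices; and summing a telescoping inequality over $k$.

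**Expected main obstacle.**

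The hard part will be making the "level sets can't thin out without creating boundary" intuition quantitative and turning it into the exact constant $1/(2\Delta)$. Specifically, I expect difficulty in controlling edges that stay \emph{within} a level set $S_k$ (horizontal edges) versus those going up or down, since a vertex $u$ with many horizontal edges and one downward edge can still fail to be a boundary vertex, yet contributes nothing to "flux." The factor $2$ and the factor $\Delta$ suggest the final argument compares $|\partial G|$ against the number of level sets via a pigeonhole on $\sum_k |S_k| = |V|$ picking the thinnest slab, then bounds the number of boundary vertices in adjacent slabs from below by a flux/conservation argument — so the real work is a careful accounting of the discrete divergence of $f_v$ across one well-chosen level set, ensuring each boundary vertex is counted with multiplicity at most $2\Delta$.
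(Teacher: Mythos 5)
Your overall architecture --- level sets $A_i = \{w : d(w,v)=i\}$ from a fixed vertex $v$, a flux/conservation argument across consecutive levels in which boundary vertices ``absorb'' at most $\Delta$ incoming edges each, and a pigeonhole over the levels --- is exactly the skeleton of the paper's proof (which in fact proves the stronger Theorem~2: a \emph{single} vertex $v$ already identifies $\frac{1}{2\Delta}\frac{|V|}{\diam(G)}$ boundary vertices). However, the proposal stops precisely where the work happens, and two of your stated concerns point in the wrong direction. First, the obstacle you flag about horizontal edges is not an obstacle: since every neighbor $w$ of $u\in A_i$ lies in $A_{i-1}$, $A_i$ or $A_{i+1}$, one has $(Lf_v)(u) = (\#\text{ neighbors in } A_{i-1}) - (\#\text{ neighbors in } A_{i+1})$; horizontal edges contribute zero, so ``$u$ not identified as boundary by $v$'' is exactly ``at least as many up-neighbors as down-neighbors.'' Summing this over $A_i$ yields the key inequality
$$ |E(A_{i-1},A_i)| \;\leq\; |E(A_i,A_{i+1})| + \Delta\,|\partial G \cap A_i|,$$
which your proposal gestures at but never states or proves. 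Second, the pigeonhole must select the \emph{thickest} level set, not the thinnest: you need some $i_0$ with $|A_{i_0}| \geq (|V|-1-|A_\ell|)/(\ell-1)$, because $|E(A_{i_0-1},A_{i_0})| \geq |A_{i_0}|$ (every vertex at level $i_0\geq 1$ has a down-neighbor along a shortest path) then guarantees a large incoming flux, and telescoping the displayed inequality from $i_0$ to $\ell-1$ converts that flux into boundary vertices. Picking the thinnest slab gives a small flux and no lower bound at all.

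Moreover, the constant $2\Delta$ does not arise from ``each boundary vertex counted with multiplicity at most $2\Delta$''; it comes from an endpoint trade-off that the proposal omits. The telescoped sum leaves a loss term $|E(A_{\ell-1},A_\ell)| \leq \Delta|A_\ell|$, while every vertex of the last level $A_\ell$ is automatically a boundary vertex, so one gets
$$ |\partial G| \;\geq\; \max\left\{|A_\ell|,\ \frac{1}{\Delta}\,\frac{|V|-1-|A_\ell|}{\ell-1} - |A_\ell|\right\},$$
and optimizing over $|A_\ell|$ produces the denominator $2\Delta(\ell-1)+1$, hence the factor $2\Delta$ after using $\ell\leq\diam(G)$ and $|\partial G|\geq 2$. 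Without the flux inequality, the correct direction of the pigeonhole, and this balancing step, what you have is a plausible plan rather than a proof.
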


It is easy to see that the inequality has the correct scaling in $|V|$ and $\diam(G)$. Consider $\Omega = [0,1]^d$ and approximate it with a grid graph $G$. We have 
$ \Delta = 2d$, while, as $|V| \rightarrow \infty$, we have $\diam(G) \sim d \cdot |V|^{(d-1)/d}$ and
$ | \partial G| \sim 2d \cdot |V|^{(d-1)/d}$. It is an interesting question whether the inequality has the sharp scaling in $\Delta$. This might not be the case: when $\Delta \sim |V|$, then our inequality does not imply any nontrivial bound while a result of  M\"uller-P\'or-Sereni \cite{mu} implies
$$ |\partial G| \geq |(\partial G)^*| \geq \log_2(\Delta + 2).$$
 The inequality $|(\partial G)^*| \geq \log_2(\Delta + 2)$ is known to be sharp up to constants, see \cite{mu}. It is certainly conceivable that $ |\partial G|  \geq \log_2(\Delta + 2)$ might not be.

\subsection{A Refined Isoperimetric Inequality.} The proof of Theorem 1 gives more information. The definition of $\partial G$ is as follows: for each vertex $v$, we add any vertex $u$ with the property that a randomly chosen neighbor of $u$ is, in expectation, closer to $v$ than $d(u,v)$. In particular, by looking at a single vertex $v$ we can already identify parts of the boundary ($\partial G$ is given by the union of these sets over all $v$). What is particularly interesting is that the proof of Theorem 1 implies a \textit{much} stronger result:  each individual vertex $v$ already creates a large boundary.

\begin{theorem} If $G$ is a connected graph with maximal degree $\Delta$, then for all $v \in V$
$$ \left| \left\{u \in V \big|  \quad  \frac{1}{\deg(u)} \sum_{(u, w) \in E} d(w,v) < d(u,v)  \right\}\right| \geq \frac{1}{2\Delta} \frac{|V|}{\diam(G)}.$$
\end{theorem}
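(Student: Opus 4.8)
The plan is to fix $v \in V$ and study the distance function $f_v(w)=d(w,v)$ through its level sets $S_k=\{w\in V : d(w,v)=k\}$ for $0\le k\le R$, where $R$ is the eccentricity of $v$ (so $1\le R\le\diam(G)$ once $|V|\ge 2$); write $n_k=|S_k|$, so that $n_0=1$ and $\sum_{k=0}^R n_k=|V|$. Call $B_v$ the set whose cardinality we must bound. By the Laplacian description from the introduction, $u\in B_v$ exactly when $(Lf_v)(u)>0$, and since $|d(u,v)-d(w,v)|\le 1$ along every edge we have $(Lf_v)(u)=a(u)-c(u)$, where $a(u)$ (resp. $c(u)$) counts the neighbors of $u$ that lie one step closer to (resp. farther from) $v$. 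This is an integer, it satisfies $0\le a(u)-c(u)\le\deg(u)\le\Delta$ whenever $u\in B_v$, and $(Lf_v)(u)\le 0$ whenever $u\notin B_v$.

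The crucial step is a telescoping identity obtained by summing $Lf_v$ over a \emph{tail} of level sets. Let $e_k$ denote the number of edges joining $S_{k-1}$ to $S_k$. Summing $(Lf_v)(u)=a(u)-c(u)$ over $u\in S_k$ and counting the edges down to $S_{k-1}$ and up to $S_{k+1}$ gives $\sum_{u\in S_k}(Lf_v)(u)=e_k-e_{k+1}$; since $S_{R+1}=\emptyset$, summing over all $k\ge j$ telescopes to
$$\sum_{u\,:\,d(u,v)\ge j}(Lf_v)(u)=e_j,\qquad 1\le j\le R.$$
Moreover $e_j\ge n_j$, since every vertex of $S_j$ lies on a shortest path from $v$ and hence contributes at least one (distinct) edge down to $S_{j-1}$.

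To conclude I would bound the same sum from above: the terms with $u\notin B_v$ are $\le 0$, and each term with $u\in B_v$ is at most $\Delta$, so $e_j\le\Delta\,|B_v|$, and therefore $|B_v|\ge n_j/\Delta$ for every $j\in\{1,\dots,R\}$. Choosing $j$ to maximize $n_j$ and using $n_1+\cdots+n_R=|V|-1$ together with $R\le\diam(G)\le|V|-1$ yields
$$|B_v|\ \ge\ \frac{|V|-1}{\Delta R}\ \ge\ \frac{|V|}{\Delta(R+1)}\ \ge\ \frac{|V|}{\Delta(\diam(G)+1)}\ \ge\ \frac{1}{2\Delta}\,\frac{|V|}{\diam(G)},$$
which is the asserted bound (and Theorem 1 is recovered immediately, since $\partial G\supseteq B_v$). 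I do not expect a genuine obstacle here; the one real idea is to sum $Lf_v$ over a tail $\{d(u,v)\ge j\}$ rather than over all of $V$ (which gives $0$ because $L\mathbf 1=0$) or over a single shell (which gives only $e_k-e_{k+1}$, possibly non‑positive). The points needing a little care are that the pigeonhole argument must avoid the shell $S_0=\{v\}$, for which $e_0=0$, and that replacing $\diam(G)+1$ by $2\diam(G)$ uses $\diam(G)\ge 1$.
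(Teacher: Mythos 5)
Your proof is correct and is essentially the paper's argument: the same BFS-shell decomposition around $v$, the same per-shell inequality $e_k-e_{k+1}\le \Delta\,|B_v\cap S_k|$ (which the paper phrases as $|E(A_{k-1},A_k)|\le |E(A_k,A_{k+1})|+\Delta\,|\partial G\cap A_k|$), and the same telescoping-plus-pigeonhole conclusion. The only real difference is bookkeeping: you telescope through the last nonempty shell, where $e_{R+1}=0$, so you never need the paper's separate treatment of the final shell $A_\ell$ and the optimization over $|A_\ell|$; as a result, for $R\ge 2$ your intermediate bound $(|V|-1)/(\Delta R)$ is in fact slightly sharper than the paper's $(|V|-1)/(2\Delta(R-1)+1)$.
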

An example is seen in Fig. \ref{fig:imp}: we clearly only recover a subset of the
boundary but Theorem 2 guarantees that this subset is not too small. We are not aware of this principle having any analogue in the continuous setting (which inspired the discussion in \S 2.5 where a continuous analogue is proposed).
\vspace{-5pt}
\begin{center}
\begin{figure}[h!]
\begin{tikzpicture}[scale=1]
\node at (0,0) {\includegraphics[width=0.6\textwidth]{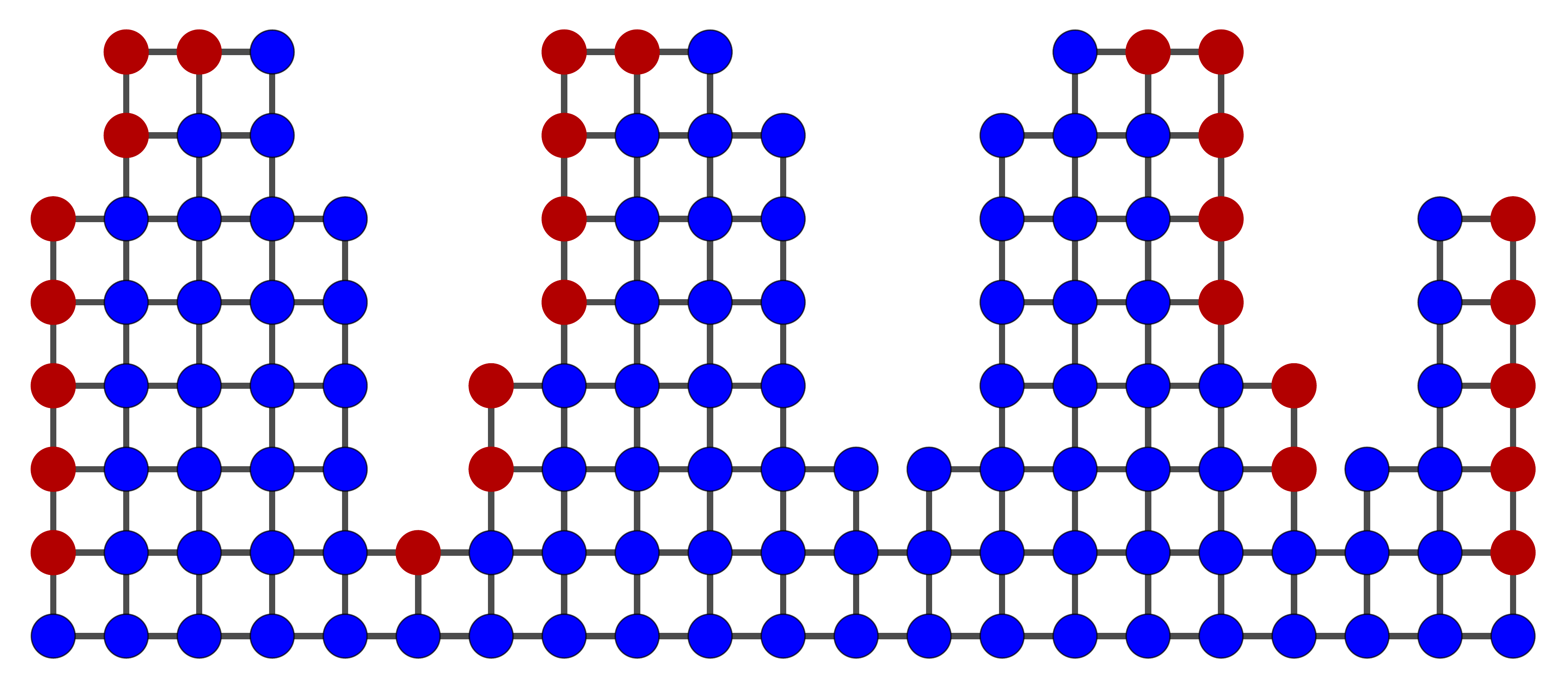}};
\draw [ultra thick] (0.71, -1.41) circle (0.2cm);
\node at (0.95, -1.7) {$v$};
\end{tikzpicture}
\vspace{-10pt}
\caption{The part of $\partial G$ identifiable from $v$.}
\label{fig:imp}
\end{figure}
\end{center}
\vspace{-15pt}

The proof of Theorem 2 seems to suggest that this bound might be somewhat close to sharp:  tracking the precise constants, we get  the slightly better bound 
$$ \left| \left\{u \in V \big|  \quad  \frac{1}{\deg(u)} \sum_{(u, w) \in E} d(w,v) < d(u,v)  \right\}\right| \geq \frac{|V|-1}{2 \Delta (\diam(G)-1) + 1}.$$
However, Theorem 2 being close to sharp for a single vertex $v$ need not necessarily imply that Theorem 1 is sharp since $\partial G$ is defined as the union
over all vertices that are seen as being boundary by \textit{some} other vertex. This raises the question of whether it is possible for Theorem 2 to be close to sharp
for all vertices simultaneously and, if so, whether the parts of the boundary identified by these different vertices are different or largely overlapping.

\subsection{Back to Euclidean space.} Theorem 2 (see also Fig. \ref{fig:imp}) seems to suggest a new type of isoperimetric principle 
in Euclidean space. 
Let $\Omega \subset \mathbb{R}^d$ be a bounded domain with smooth boundary and let $x \in \Omega$ be an arbitrary point. We define a subset
$(\partial \Omega)_x \subseteq \partial \Omega$ via
$$ (\partial \Omega)_x = \left\{y \in \partial \Omega: \mbox{the geodesic from}~x~\mbox{to}~y~\mbox{arrives non-tangentially} \right\}.$$
We note that the geodesic is defined as the shortest path $\gamma:[0,1] \rightarrow \Omega$ with $\gamma(0) = x$ and $\gamma(1) = y$. We say that it arrives non-tangentially if $\left\langle \gamma'(1), \nu \right\rangle \neq 0$, where $\nu$ is the normal vector of $\partial \Omega$ in $y$.
This could be understood, in some sense, as the continuous analogue of our definition of graph boundary. If $\Omega$ is convex and $x$ is not on the boundary, then $(\partial \Omega)_x = \partial \Omega$
and not much changes. For non-convex domains $\Omega$, we observe that $(\partial \Omega)_x$ can be much smaller than $\partial \Omega$. 
\begin{center}
\begin{figure}[h!]
\begin{tikzpicture}[scale=0.8]
\draw [ultra thick] (0,0) ellipse (1cm and 1.4cm);
\filldraw (0, 0.2) circle (0.07cm);
\node at (0.2, -0.2) {$x$};
\draw [] (0, 0.2) -- (0, 1.4);
\draw [] (0, 0.2) -- (1, 0);
\draw [] (0, 0.2) -- (-1, -0.3);
\draw [] (0, 0.2) -- (-0.5, -1.2);
\draw[ultra thick] (3,0) to[out=90, in =180] (4,1.4) to[out=0, in=180] (5, 0.4);
\draw[ultra thick] (3,0) to[out=270, in =180] (4,-1.4) to[out=0, in=180] (5, -0.4);
\draw[] (5, 0.4) -- (5.5, 0.4);
\draw[] (5.5, 0.4) -- (6.5, 0.4);
\draw[] (6.5, 0.4) -- (7, 0.4);
\draw[] (5, -0.4) -- (5.5, -0.4);
\draw[] (5.5, -0.4) -- (6.5, -0.4);
\draw[] (6.5, -0.4) -- (7, -0.4);
\draw[ultra thick] (7, -0.4) -- (7, 0.4);
\draw [ultra thick] (4.25, -0.75) -- (4.25, 0.75);
\filldraw (3.5, 0) circle (0.05cm);
\node at (3.5, -0.25) {$x$};
\draw [] (3.5,0) -- (4.25, 0.75) -- (4.8, 0.45);
\draw (12,-0.24) -- (9,0) -- (12,0.24);
\filldraw (9,0) circle (0.07cm);
\draw [ultra thick,domain=0:5] plot ({9 + 3*cos(\x)}, {3*sin(\x)});
\draw [ultra thick,domain=355:360] plot ({9 + 3*cos(\x)}, {3*sin(\x)});
\draw [ultra thick,domain=0:5] plot ({9 + 2.98*cos(\x)}, {2.98*sin(\x)});
\draw [ultra thick,domain=355:360] plot ({9 + 2.98*cos(\x)}, {2.98*sin(\x)});
\node at (9, -0.3) {$x$};
\end{tikzpicture}
\vspace{-10pt}
\caption{Various examples of $(\partial \Omega)_x$.}
\label{fig:2D}
\end{figure}
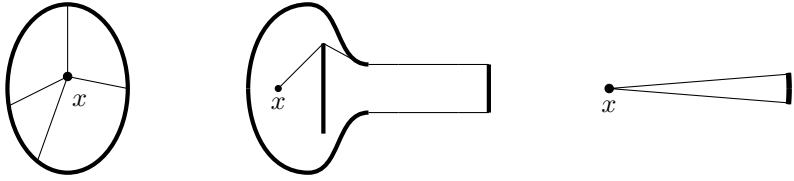
\end{center}
\vspace{-10pt}
Examples are shown in Fig \ref{fig:2D}: the domain in the middle is comprised of a ball-like domain (with a slit removed) to which a large rectangular domain has been added. $(\partial \Omega)_x$ contains all the domain of the ball-region but does not contain the long axes of the rectangle (which can be arbitrarily long) because the geodesic in that region arrives tangentially. The example on the right shows an example where $(\partial \Omega)_x$ is concentrated on a small part of the curved part of the boundary $\partial \Omega$.

\begin{proposition}
Let $\Omega \subset \mathbb{R}^d$ be a bounded domain with smooth boundary and $x \in \Omega$. If the distance function from $x$ is sufficiently smooth and satisfies, for all $y \neq x$,
$$ \Delta_y d(x, y) \geq \frac{d-1}{d(x, y)},$$
then
$$ \forall x \in \Omega \qquad  | (\partial \Omega)_x| \geq (d-1) \frac{|\Omega|}{\diam(\Omega)}.$$
\end{proposition}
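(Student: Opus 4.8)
The plan is to mimic, in the continuous setting, the ``sum the Laplacian of the distance function'' idea behind Theorem 2: instead of summing $L f_v$ over vertices, I will integrate $\Delta_y d(x,y)$ over $\Omega$ and read off the boundary term. Write $f(y) = d(x,y)$. Away from $x$ (and from the cut locus of $x$, which the smoothness hypothesis allows us to disregard) $f$ solves the eikonal equation $|\nabla f| = 1$, and for $y \in \partial\Omega$ the vector $\nabla f(y)$ is exactly the unit tangent at $y$ of the minimizing geodesic running from $x$ to $y$, i.e.\ it points in the direction of $\gamma'(1)$. Consequently $\partial_\nu f(y) = \langle \nabla f(y), \nu(y)\rangle$ vanishes precisely when that geodesic arrives tangentially, so $\partial_\nu f \equiv 0$ on $\partial\Omega \setminus (\partial\Omega)_x$, while everywhere on $\partial\Omega$ one has $|\partial_\nu f| \le |\nabla f| = 1$.

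The second step is one application of the divergence theorem. Since $d \ge 2$ we have $\Delta f \ge (d-1)/f > 0$, and $\Delta f$ is integrable near $x$ because in a neighbourhood of $x$ the geodesic distance coincides with the Euclidean one, so there $\Delta f(y) = (d-1)/|y-x|$, which is integrable on a punctured ball when $d \ge 2$. Removing a small ball $B_\varepsilon(x)$, applying Green's identity to $\nabla f$ on $\Omega \setminus \overline{B_\varepsilon(x)}$, and letting $\varepsilon \to 0$: the inner sphere contributes the term $\int_{\partial B_\varepsilon(x)} \langle \nabla f, -\nabla f\rangle\, d\sigma = -|\partial B_\varepsilon(x)|$, which vanishes as $\varepsilon \to 0$, and monotone convergence applied to the nonnegative integrand $\Delta f$ yields
\[
\int_\Omega \Delta f\, dy \;=\; \int_{\partial\Omega} \partial_\nu f\, d\sigma .
\]
Here it is crucial that $f$ is regular enough that $\Delta f$ carries no concentrated (negative) mass along the cut locus of $x$; this is exactly the role of the hypothesis that ``the distance function from $x$ is sufficiently smooth,'' and it is the only genuinely delicate point — everything else is a routine computation.

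Finally I combine the two sides. Using the hypothesis $\Delta_y d(x,y) \ge (d-1)/d(x,y)$ together with $d(x,y) \le \diam(\Omega)$ for all $y \in \Omega$,
\[
\int_\Omega \Delta f\, dy \;\ge\; (d-1)\int_\Omega \frac{dy}{d(x,y)} \;\ge\; \frac{d-1}{\diam(\Omega)}\, |\Omega| ,
\]
while, using $\partial_\nu f \equiv 0$ off $(\partial\Omega)_x$ and $|\partial_\nu f| \le 1$,
\[
\int_{\partial\Omega} \partial_\nu f\, d\sigma \;=\; \int_{(\partial\Omega)_x} \partial_\nu f\, d\sigma \;\le\; \int_{(\partial\Omega)_x} 1\, d\sigma \;=\; |(\partial\Omega)_x| .
\]
Chaining these through the identity above gives $|(\partial\Omega)_x| \ge (d-1)\,|\Omega|/\diam(\Omega)$, as claimed (the statement being vacuous for $d=1$). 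The main obstacle, as indicated, is purely regularity-theoretic: one must know the geodesic distance from $x$ is smooth enough — cut locus of measure zero, no singular part of the Laplacian — that the divergence theorem holds without correction terms; granting this, the estimate is precisely the integrated Laplacian-comparison inequality.
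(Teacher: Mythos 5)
Your proof is correct and follows essentially the same route as the paper: lower-bound $\int_\Omega \Delta_y d(x,y)\,dy$ by $(d-1)|\Omega|/\diam(\Omega)$ using the hypothesis, convert it to a boundary integral of $\partial_\nu d(x,\cdot)$ by the divergence theorem, observe that this normal derivative vanishes off $(\partial\Omega)_x$ and is bounded by $1$ by Lipschitzness. The only difference is that you spell out the regularity bookkeeping (excising $B_\varepsilon(x)$, the vanishing inner-sphere term, the cut locus) that the paper absorbs into the ``sufficiently smooth'' hypothesis, which is a welcome clarification rather than a deviation.
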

It seems like an interesting problem to understand for which domains $\Omega \subset \mathbb{R}^d$ such an inequality is true, how the optimal constant behaves depending on the dimension and what extremal domains could look like. It is easy to see that for $d=2$, the constant is optimal up to at most a factor of 2 (see Fig. \ref{fig:2D}, right): let $\Omega$ to be a narrow disk-sector with radius 1 and opening angle $0 \leq \alpha \ll 1$ in $\mathbb{R}^2$ and choose $x$ to be in the corner. $\Omega_x$ is then exactly the curved part of the boundary and
$$| (\partial \Omega)_x| = 2 \pi r \alpha \qquad |\Omega| = r^2 \pi \alpha \quad \mbox{and} \quad \mbox{diam}(\Omega) = r.$$
 The condition
$ \Delta d(x, \cdot) \geq (d-1)/d(x, \cdot)$
is always satisfied in convex domains: note that 
$$ \Delta \sqrt{x_1^2 + \dots + x_d^2} = \frac{d-1}{\sqrt{x_1^2 + \dots + x_d^2}}$$
is an exact equation in $\mathbb{R}^d$. In the case of a polygonal boundary, we can use the same identity to argue that if a geodesic is sliding along
part of the boundary, then the Laplacian will be determined by the distance between $y$ and the boundary (along the geodesic) and the condition is satisfied. 
We expect $ \Delta d(x, \cdot) \geq (d-1)/d(x, \cdot)$ to be violated when there is non-uniqueness of geodesics: is the right condition for $\Omega \subset \mathbb{R}^2$ to have such an inequality perhaps that $\Omega$ is simply connected?

\section{Proofs}

\subsection{Proof of Proposition 2}
\begin{proof} 
Let $G$ be a connected graph with at least two vertices. Let $u$ be a vertex of degree 1 and let $v$ be any other vertex. Then the path from $v$ to $u$ is unique (because $G$ is a tree) and the path terminates in $u$. This means $u$ has exactly one neighbor and that this neighbor is closer to $v$ than $u$ and thus
$$ \frac{1}{\mbox{deg}(u)} \sum_{(u, w) \in E} d(w,v) = d(u,v) - 1 < d(u,v) $$
implying that $u \in \partial G$.  Suppose conversely now that
$G$ is a tree, $u$ is a vertex of degree at least 2 and $v \neq u$. Then, since the path from $v$ to $u$ is unique: one of the neighbors of $u$ is closer to $v$ and all the other ones are further away and thus
\begin{align*}
 \frac{1}{\mbox{deg}(u)} \sum_{(u, w) \in E} d(w,v) &=  \frac{1}{\mbox{deg}(u)} \left(d(u,v) - 1 + (\mbox{deg}(u) - 1) (d(u,v) + 1)\right)\\
 &\geq d(u,v)
 \end{align*}
and thus $u \not\in \partial G$.
\end{proof}

\subsection{Proof of Proposition 4}
\begin{proof}
We identify the $2d$ neighbors of $u \in V$ with $\pm e_1, \pm e_2, \dots, \pm e_d$, where $e_i = (0,0,\dots, 0, 1, 0, \dots, 0)$ is the $i-$th standard vector in $\mathbb{R}^d$. Suppose now that $u$ is a vertex in $G_{}$ which has $2d$ neighbors and that $u \in \partial G$ which means that there
exists $v \in V$ such that
$$ \frac{1}{2d} \sum_{(u, w) \in E} d(w,v) < d(u,v).$$
 We partition the set $\left\{1,2,\dots, d \right\}$ into two sets:
$$ A = \left\{ 1 \leq i \leq n: \exists \mbox{ shortest path from}~v~\mbox{to}~u~\mbox{containing}~u + e_i ~\mbox{or} ~u - e_i \right\}$$
and $B = \left\{1,2,\dots, n\right\} \setminus A$. For each $i \in B$, we can infer that
$$ d(u + e_i, v) \geq d(u,v) \qquad \mbox{and} \qquad d(u - e_i, v) \geq d(u,v).$$
If either of these inequalities are attained, then we have found two adjacent vertices with the same distance from $v$. Therefore we may assume that
$$ d(u + e_i, v) = d(u,v)+1 \qquad \mbox{and} \qquad d(u - e_i, v) = d(u,v)+1.$$
Let now $i \in A$: then there exists a shortest path through either $u + e_i$ or $u - e_i$ or both. If there exists one through both, then
$$ d(u \pm e_i, v) = d(u,v) - 1$$
and we are done. Let us thus assume that for each $i \in A$, exactly one of the two neighbors admits a shortest path. If the shortest
path goes through $u + e_i$, then $d(u - e_i, v) \in \left\{d(u,v), d(u,v)+1\right\}$. If it were $d(u,v)$, we would be done since we would have found two adjacent vertices with the same distance. Therefore we may
assume $d(u - e_i, v) = d(u,v) + 1$. In that case, we end up with
$$ \frac{1}{2d} \sum_{(u, w) \in E} d(w,v) = \frac{|A|}{2d} 2 d(u,v) + \frac{|B|}{2d} (2d(u,v) + 2) \geq d(u,v)$$
which contradicts $u \in \partial G$.
\end{proof}

\subsection{Proof of Theorem 2} 
\begin{proof}
Let $v_0 \in V$ be arbitrary. Denoting the maximal distance from $v_0$ by
$$ \ell = \max_{w \in V} d(v_0, w) \leq \diam(G),$$
we define for
$0 \leq i \leq \ell$, the sets
$$ A_i = \left\{ v \in V: d(v, v_0) = i \right\}$$
and note that these sets induce a partition of the vertices 
$$ \bigcup_{i=0}^{\ell} A_i = V.$$

\begin{center}
\begin{figure}[h!]
\begin{tikzpicture}[scale=1.25]
\filldraw (0,0) circle (0.04cm);
\draw[thick] (1,0) ellipse (0.3cm and 0.6cm);
\draw[thick] (2,0) ellipse (0.3cm and 0.6cm);
\node at (3,0) {$\dots$};
\draw[thick] (4,0) ellipse (0.3cm and 0.6cm);
\draw [thick] (0,0) -- (0.9, 0.2);
\draw [thick] (0,0) -- (0.9, -0.4);
\draw [thick] (0,0) -- (0.9, 0.5);
\draw [thick] (0.9, 0.5) -- (2.1, 0);
\draw [thick] (0.9, 0.5) -- (2.1, -0.3);
\draw [thick] (0.9, -0.4) -- (2.1, 0.2);
\draw [dashed] (3, 0.5) -- (2.1, 0);
\draw [dashed] (2.8, 0.5) -- (2.1, -0.3);
\draw [dashed] (2.9, -0.4) -- (2.1, 0.2);
\draw [dashed] (3, 0.5) -- (2.1, 0);
\draw [dashed] (2.8, 0.5) -- (2.1, -0.3);
\draw [dashed] (2.9, -0.4) -- (2.1, 0.2);
\node at (0, -0.85) {$A_0$};
\node at (1, -0.85) {$A_1$};
\node at (2, -0.85) {$A_2$};
\node at (4, -0.8) {$A_{\ell}$};
\draw [dashed] (4, 0.5) -- (3.1, 0);
\draw [dashed] (4, -0.2) -- (3.1, -0.3);
\draw [dashed] (4.2, 0.2) -- (3.1, 0.3);
\draw [dashed] (3.9, -0.4) -- (3.1, 0.2);
\end{tikzpicture}
\caption{Ordering vertices by distance from a fixed vertex.}
\end{figure}
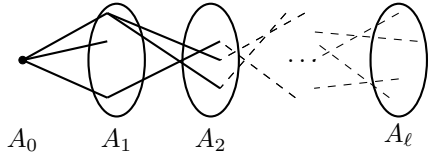
\end{center}

 A vertex $v \in A_i$ may or may not have a neighbor in $A_{i+1}$. We claim that if it does not, then $v \in \partial G$. This can be seen as follows: in the absence of neighbors in $A_{i+1}$ all of
 the neighbors of $v$ are either in $A_i$ or $A_{i-1}$. Moreover, since there is a shortest path from $v_0$ to $v$, it follows that $v$ has at least one neighbor in $A_{i-1}$ and 
 $$  \frac{1}{\deg(v)} \sum_{(v_0, w) \in E} d(w,v_0) \leq \frac{(d(w,v_0) -1) + (\deg(v) -1) d(w,v_0)}{\deg(v)} < d(w,v_0).$$
 Using again that each vertex in $A_i$ has at least one neighbor in $A_{i-1}$ implies for the number of edges between $A_{i-1}$ and $A_i$ that
$$ | E(A_{i-1}, A_{i})| \geq  | A_i |.$$
Let us now take a vertex $v \in A_i$ and suppose that $v \notin \partial G$ (more precisely that $v$ is not identified as being part of the boundary from the perspective of $v_0$). Then
 $$  \frac{1}{\deg(v)} \sum_{(v, w) \in E} d(w,v_0) \geq d(v,v_0).$$
Each neighbor $w$ of $v$ is either in $A_{i-1}$ or $A_i$ or $A_{i+1}$ and therefore has to satisfy
$$ d(w,v_0) \in \left\{ d(v_0, v) - 1, d(v_0,v), d(v_0,v) + 1\right\}.$$
Therefore, $v \in A_i$ not being identified as part of the boundary by $v_0$, implies that $v$ has at least as many neighbors in $A_{i+1}$ as in $A_i$, we have
$$ | \left\{w \in V: (v,w) \in E ~\quad~w \in A_{i+1} \right\} | \geq  | \left\{w \in V: (v,w) \in E ~ \mbox{and} ~w \in A_{i-1} \right\} |.$$
If $v \in A_i$ is identified as being part of the boundary by $v_0$, then we know that
$$  | \left\{w \in V: (v,w) \in E ~ \mbox{and} ~w \in A_{i-1} \right\} | \leq \Delta.$$
This leads to an interesting dichotomy: if $ | E(A_{i-1}, A_{i})| $ is large, if there are many edges from $A_{i-1}$ to $A_i$, then
there are either also many edges from $A_i$ to $A_{i+1}$ or there are many boundary points in $A_i$.
 Formally, 
$$  |E(A_{i-1}, A_{i})| \leq  | E(A_{i}, A_{i+1})| + \Delta \cdot | \partial G \cap A_i|.$$
This can be rewritten as
\begin{equation} \label{useful}
   |\partial G \cap A_i| \geq \frac{1}{\Delta} \left(| E(A_{i-1}, A_{i})| -  | E(A_{i}, A_{i+1})|\right).
   \end{equation}

\begin{center}
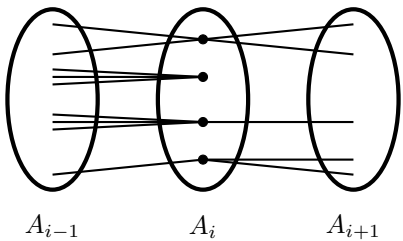
\begin{figure}[h!]
\begin{tikzpicture}[scale=2]
\draw[ultra thick] (1,0) ellipse (0.3cm and 0.6cm);
\draw[ultra thick] (2,0) ellipse (0.3cm and 0.6cm);
\draw[ultra thick] (3,0) ellipse (0.3cm and 0.6cm);
\node at (1, -0.85) {$A_{i-1}$};
\node at (2, -0.85) {$A_i$};
\node at (3, -0.85) {$A_{i+1}$};
\filldraw (2, 0.4) circle (0.03cm);
\filldraw (2, 0.15) circle (0.03cm);
\filldraw (2, -0.15) circle (0.03cm);
\filldraw (2, -0.4) circle (0.03cm);
\draw [thick] (1, 0.5) -- (2, 0.4) -- (3, 0.5);
\draw [thick] (1, 0.3) -- (2, 0.4) -- (3, 0.3);
\draw [thick] (1, 0.2) -- (2, 0.15);
\draw [thick] (1, 0.15) -- (2, 0.15);
\draw [thick] (1, 0.1) -- (2, 0.15);
\draw [thick] (1, -0.2) -- (2, -0.15);
\draw [thick] (1, -0.15) -- (2, -0.15);
\draw [thick] (1, -0.1) -- (2, -0.15) -- (3, -0.15);
\draw [thick] (1, -0.5) -- (2, -0.4) -- (3, -0.5);
\draw [thick] (2, -0.4) -- (3, -0.4);
\end{tikzpicture}
\caption{Each vertex $v \in A_i$ that is not in the boundary has at least as many neighbors in $A_{i+1}$ as in $A_{i-1}$. A boundary vertex $v \in A_i \cap \partial G$ absorbs at most  $\Delta$ incoming edges.}
\end{figure}
\end{center}

Note that each element in $A_{\ell}$ is necessarily in the boundary and thus
$$ |\partial G| \geq |A_{\ell}|.$$  
 However, it is certainly conceivable that $|A_{\ell}|$ is very small. In that case, there are at least $|V| - 1 - |A_{\ell}|$
 other vertices that are partitioned over the set $A_1, \dots, A_{\ell-1}$.
The pigeonhole principle implies that at least one of these sets is large: there exists an $1 \leq i_0 \leq \ell-1$ such that
$$ | A_{i_0}| \geq  \frac{|V|-1 - |A_{\ell}|}{\ell-1}.$$
This also implies, since each vertex in $A_{i_0}$ has at least one neighbor in $A_{i_0 -1}$,
$$ | E(A_{i_0 -1} , A_{i_0}) |\geq | A_{i_0}| \geq  \frac{|V|-1 - |A_{\ell}|}{\ell-1}.$$
Summing \eqref{useful} and using that
$$ |E(A_{\ell -1}, A_{\ell})| \leq \Delta \cdot |A_{\ell}|,$$
\begin{align*}
| \partial G| &\geq  \sum_{i = i_0}^{\ell - 1}|\partial G \cap A_i | \\
&\geq  \sum_{i = i_0}^{\ell - 1}  \frac{1}{\Delta} \left(| E(A_{i-1}, A_{i})| -  | E(A_{i}, A_{i+1})|\right) \\
&\geq \frac{1}{\Delta} \left( |E(A_{i_0 -1} , A_{i_0})| - | E(A_{\ell-1} , A_{\ell})|\right) \\
&\geq  \frac{1}{\Delta} \left(  \frac{|V|-1 - |A_{\ell}|}{\ell-1} -  |E(A_{\ell-1} , A_{\ell})|\right) \geq  \frac{1}{\Delta}   \frac{|V|-1 - |A_{\ell}|}{\ell-1} -  |A_{\ell}|. 
 \end{align*}
This implies
$$ |\partial G| \geq  \max\left\{|A_{\ell}|,\frac{1}{\Delta}   \frac{|V|-1 - |A_{\ell}|}{\ell-1} -  |A_{\ell}|  \right\}.$$
The maximum is minimized when
$$ |A_{\ell}| =  \frac{|V|-1}{2 \Delta (\ell-1) + 1}$$
and thus, using $\ell \leq \diam(G)$ and $\Delta \geq 1$,
$$ |\partial G|  \geq   \frac{|V|-1}{2 \Delta (\diam(G)-1) + 1} \geq  \frac{|V|-1}{2 \Delta \diam(G) -1}.$$
For positive $A,B > 0$, we have $(A-1)/(B-1) \geq A/B$ whenever $A \geq B$ and thus whenever $A/B \geq 1$. Combining this with $|\partial G| \geq 2$, we have
$$ |\partial G|  \geq  \frac{1}{2 \Delta }\frac{|V|}{\diam(G) }.$$

\end{proof}

\subsection{Proof of Proposition 5}

\begin{proof}  The argument is very short. We use
$$ \int_{\Omega}  \Delta d(x, y) dy \geq \int_{\Omega} \frac{d-1}{d(x,y)} dy \geq \frac{d-1}{\diam(\Omega)} |\Omega|.$$
Integration by parts gives
$$  \int_{\Omega}  \Delta d(x, y) dy = \int_{\partial \Omega} \frac{\partial d(x, z)}{\partial \nu} d \sigma(z).$$
The partial derivative is either positive (if the geodesic arrives nontangentially) or 0 (if the geodesic arrives tangentially). Therefore
$$  \int_{\partial \Omega} \frac{\partial d(x, z)}{\partial \nu} d \sigma(z) =  \int_{(\partial \Omega)_x} \frac{\partial d(x, z)}{\partial \nu} d \sigma(z).$$
Since the distance function is $1-$lipschitz, we have
$$ \int_{(\partial \Omega)_x} \frac{\partial d(x, z)}{\partial \nu} d \sigma(z) \leq  \int_{(\partial \Omega)_x}  d \sigma(z) = |(\partial \Omega)_x|.$$
\end{proof}

\end{document}